\theoremstyle{plain}
\newtheorem{definition}{Definition}
\newtheorem{lemma}{Lemma}
\newtheorem{theorem}{Theorem}
\newtheorem{corollary}{Corollary}
\newtheorem{proposition}{Proposition}
\newtheorem*{ruleofthumb}{Rule of Thumb}
\theoremstyle{definition}
\newtheorem{example}{Example}
\theoremstyle{remark}
\newtheorem{remark}{Remark}
\newcommand{\C}{\mathbb C}
\newcommand{\R}{\mathbb R}
\newcommand{\Log}{\operatorname{Log}}
\newcommand{\Arg}{\operatorname{Arg}}
\newcommand{\Em}{\operatorname{Em}}
\newcommand{\cl}{\operatorname{cl}}
\title{Modelling wave propagation without sampling restrictions using the multiplicative calculus I: Theoretical considerations}
\author{Max Cubillos}
\begin{document}

\date{}

\maketitle

\begin{abstract}
The multiplicative (or geometric) calculus is a non-Newtonian calculus derived
from an arithmetic in which the operations of
addition/subtraction/multiplication are replaced by
multiplication/division/exponentiation.  A major difference between the
multiplicative calculus and the classical additive calculus, and one that has
important consequences in the simulation of wave propagation problems, is that
in geometric calculus the role of polynomials is played by exponentials of a
polynomial argument.  For example, whereas a polynomial of degree one has
constant (classical) derivative, it is the exponential function that has
constant derivative in the multiplicative calculus. As we will show, this
implies that even low-order finite quotient approximations---the analogues of
finite differences in the multiplicative calculus---produce exact
multiplicative derivatives of exponential functions.  We exploit this fact to
show that some partial differential equations (PDE) can be solved far more
efficiently using techniques based on the multiplicative calculus. For wave
propagation models in particular, we will show that it is possible to
circumvent the minimum-points-per-wavelength sampling constraints of classical
methods. In this first part we develop the theoretical framework for studying
multiplicative partial differential equations and their connections with
classical models.
\end{abstract}

\section{Introduction} \label{sec:Introduction}

The calculus developed by Newton and Leibniz is one of most significant
breakthroughs in mathematics but an infinite number of other versions of
calculus are possible.  The treatise~\cite{grossman_non-newtonian_1972} by
Grossman and Katz is perhaps the earliest comprehensive work on other so-called
non-Newtonian calculi where they showed that any pair of injective functions
$\alpha$ and $\beta$ on the real numbers $\R$ can be used to generalize the
arithmetic that is used to define the classical calculus operations of
differentiation and integration.  As a specific example (and perhaps the most
well-known and important example), choosing $\alpha$ to be the identity and
$\beta$ to be the exponential function generates what they called the geometric
calculus, also known as the multiplicative calculus or $^*$-calculus. In this
paper, we focus exclusively on this non-Newtonian calculus (and we will
generally prefer the latter nomenclature).  The $^*$ symbol will typically
refer to multiplicative properties henceforth.

Recent contributions have expanded on the ideas of non-Newtonian calculi and
have shown some applications, particularly using the multiplicative calculus.
These include significant extensions of the multiplicative calculus to complex
numbers~\cite{bashirov_complex_2011,bashirov_riemann_2016}, contributions on
numerical algorithms in the multiplicative
calculus~\cite{aniszewska_multiplicative_2007,misirli_multiplicative_2011,ozyapici_finite_2016,ozyapici_multiplicative_2014,riza_multiplicative_2009}
and applications to specific problems of scientific
interest~\cite{florack_multiplicative_2012,uzer_exact_2015}. However, to the
authors' knowledge there have not been any numerical applications to the partial
differential equation (PDE) of mathematical physics. This paper is the first in
a series of articles that aims to bridge that gap, by applying techniques of
the multiplicative calculus to solve problems in mathematical physics far more
efficiently than current methods. Specifically, we will show that certain wave
problems can be solved with a fixed discretization to an error tolerance that
is independent of the underlying frequency of the solution.

Although the application is novel, the multiplicative calculus certainly is not
and some of the development presented in this paper has been discussed in some
of the previously cited references. For simplicity and completeness, we
rederive multiplicative calculus properties as needed.  However, unlike
previous contributions, we emphasize the distinction between the spaces on
which the classical and multiplicative calculi are most natural and explore the
interactions between these spaces using the natural isomorphisms between them
as well as appropriately defined projection and embedding operators. 

This article is organized as follows: Section~\ref{sec:Review} gives a brief
review of the multiplicative derivative and its properties.
Section~\ref{sec:AdvectionEqn} provides the motivating example of this paper,
where the advection equation is converted to the multiplicative calculus, and
the numerical solution is shown to \emph{exactly} coincide with the exact
solution for any discretization and frequency. In Section~\ref{sec:Formulation}
we then formalize the framework for numerical multiplicative PDEs by
introducing the Riemann surface on which the multiplicative calculus is most
natural (Section~\ref{sec:RiemannSurface}), reviewing the multiplicative
calculus in this space (Section~\ref{sec:ProductCalculus}), and introducing
vector spaces over this field (Section~\ref{sec:VectorSpaces}), proving several
fundamental results in the process. In Section~\ref{sec:Applications}, we apply
the results in this paper to the motivating advection equation example.
Finally, Section~\ref{sec:Conclusion} provides some brief concluding remarks.

\section{A brief review of the multiplicative calculus} \label{sec:Review}

The classical Newtonian derivative of a function $f$ of a real variable $x$ is
given by the limit
\begin{equation}
  f'(x) = \lim_{h \rightarrow 0} \frac{f(x+h)-f(x)}h. 
\end{equation}
The multiplicative derivative, or *derivative, can be derived for positive
functions $f$ by considering a similar limit, with the subtraction in the
numerator replaced by division and the division by $h$ replaced by raising to
the power $1/h$, i.e.,
\begin{equation} \label{eq:LimitDefinition}
  f^*(x) = \lim_{h \rightarrow 0} \left( \frac{f(x+h)}{f(x)} \right)^{\frac 1h}.
\end{equation}
The *derivative may also be written in terms of the classical derivative:
\begin{align}
  f^*(x) &= \lim_{h \rightarrow 0} \left( \frac{f(x+h)}{f(x)} \right)^{\frac 1h} \nonumber \\
         &= \lim_{h \rightarrow 0} e^{\left( \frac{\ln f(x+h) - \ln f(x)}h \right)} \nonumber \\
         &= e^{(\ln f)'(x)} \\ 
         &= e^{\frac{f'(x)}{f(x)}}. 
\end{align}
All the properties of the classical derivative have *derivative counterparts.
For example, if $f$ and $g$ are *differentiable, $h$ is differentiable, and $c$
is a constant, we have~\cite{bashirov_multiplicative_2008}
\begin{enumerate}[itemsep=0pt, label=\roman*.]
\item $(f^c)^* = (f^*)^c$,
\item $(fg)^* = f^*g^*$,
\item $(f/g)^* = f^*/g^*$,
\item $(f^h)^* = (f^*)^h f^{h'}$, \label{item:StarProductRule}
\item $(f \circ h)^* = (f^* \circ h)^{h'}$. \label{item:StarChainRule}
\end{enumerate}

One interpretation of the classical derivative $f'(x_0)$ is that it is the
slope of the line that is tangent to the function at the point $x_0$. By
Taylor's theorem, for a sufficiently smooth function the derivative can be used
to produce the approximation
$$ f(x) = f(x_0) + f'(x_0)(x - x_0) + O((x-x_0)^2). $$
For $f$ equal to a straight line, the remainder term above is zero and the
approximation coincides with the function itself. In the multiplicative
calculus, the analogous functions are $f(x) = a^x$ for a positive number $a$,
for which we have
\begin{equation}
  (a^x)^* = \left( e^{\ln a x} \right)^* = e^{(\ln a x)'} = a.
\end{equation}
In other words, the *derivative of an exponential function is a constant, equal
to the base. Thus, one interpretation of the *derivative of $f$ is that it is
the base of the exponential function tangent to $f$. It is the local
\emph{geometric} rate of change of a function. The multiplicative version of
Taylor's theorem~\cite{bashirov_multiplicative_2008} makes this precise:
$$ f(x) = f(x_0) f^*(x_0)^{(x-x_0)} e^{O((x-x_0)^2)}. $$
It follows that the multiplicative calculus is best suited for describing
exponential type functions. In the next section, we will show how this can be
useful for solving PDEs.

\section{A basic example: the advection equation} \label{sec:AdvectionEqn}

The advantages of the multiplicative calculus over the usual Newtonian calculus
can be seen by considering the advection equation on the positive real line for
a function $u(x,t)$,
\begin{subequations} \label{eq:AdvectionAdditive}
\begin{align}
  u_t + c\,u_x &= 0  \quad x \in \R,\, t > 0, \\
  u(x,0) &= f(x)
\end{align}
\end{subequations}
where $c$ is a real constant and $f$ is some prescribed function. The solution
of equation~\eqref{eq:AdvectionAdditive} is simply
$$ u(x,t) = f(x-ct). $$

We consider an initial condition of the form
\begin{equation} \label{eq:AdvectionIC}
  f(x) = e^{-ax^2} \cos(kx), 
\end{equation}
for some constants $a$ and $k$. This problem embodies one of the fundamental 
challenges in modelling wave physics: For large $k$, the solution is highly
oscillatory and requires a fine mesh to resolve. Low-order methods will
introduce numerical dissipation and/or dispersion, making accurate long time
propoagation difficult.

We now demonstrate how equation~\eqref{eq:AdvectionAdditive} can be efficiently
solved using the multiplicative calculus. But instead of the inital
condition~\eqref{eq:AdvectionIC}, we observe that $f$ is the real part of the
complex function
\begin{equation} \label{eq:AdvectionICComplex}
  g(x) = \exp\left(-\frac{x^2}{2\sigma^2}+i\,kx\right).
\end{equation}
Therefore, we wish to consider the advection equation for complex-valued
$u$. To do so, we first introduce notation for partial *derivatives. For a
function of more than one variable, say $f(x,t)$, we will denote the partial
*derivatives with respect to each variable as $f_x^*$ and $f_t^*$ --- that
is,
$$ f_x^* = \exp \left( \frac 1f \frac{\partial f}{\partial x} \right), \quad
   f_t^* = \exp \left( \frac 1f \frac{\partial f}{\partial t} \right). $$
Higher order partial *derivatives will be denoted similarly; for example, the
second *derivative with respect to $x$ is $f^*_{xx}$.

Using the definition of the partial *derivatives, we divide
equation~\eqref{eq:AdvectionAdditive} by $u$ and exponentiate to obtain
\begin{subequations} \label{eq:AdvectionMultiplicative}
\begin{align}
  u^*_{t}(u^*_{x})^c &= 1, \quad x \in \R, t > 0, \\
  u(x,0) &= g(x).
\end{align}
\end{subequations}
To solve equation~\eqref{eq:AdvectionMultiplicative} numerically we discretize
space and time with the uniform mesh $x_j = j\Delta x$ and $t_n = n\Delta t$,
where $j$ and $n$ are integers. Let the grid function $v^n_j$ be the numerical
solution of equation~\eqref{eq:AdvectionMultiplicative} at the point
$(x_j,t_n)$. We approximate the *derivatives at the point $(x_j,t_n)$ using the
centered finite quotients
\begin{align}
  u^*_{t}(x_j,t_n) &\approx \left( \frac{v^{n+1}_j}{v^{n-1}_j} \right)^{\frac{1}{2\Delta t}}, \\
  u^*_{x}(x_j,t_n) &\approx \left( \frac{v^n_{j+1}}{v^n_{j-1}} \right)^{\frac{1}{2\Delta x}}.
\end{align}
As with classical two-point finite difference schemes, this is an approximation
of the limit definition~\eqref{eq:LimitDefinition}. Substituting these
approximations into equation~\eqref{eq:AdvectionMultiplicative} leads to the
formula
\begin{equation} \label{eq:AdvectionMultDiscrete}
  v^{n+1}_j = v^{n-1}_j\left( \frac{ v^n_{j+1} }{ v^n_{j-1} } \right)^{\frac{ -c\Delta t }{ \Delta x }}.
\end{equation}
We compute the solution for the first time step explicitly. Because this is a
two-step method, we require a second set of initial conditions $v^{-1}_j$,
which we set to be the exact solution, i.e.,
$$ v^{-1}_j = g(j\Delta x + c\Delta t). $$
The solution for $n=1$ of equation~\eqref{eq:AdvectionMultDiscrete} is then
\begin{align}
  v^{1}_j &= e^{ -a(j\Delta x + c\Delta t)^2 } e^{ ik(j\Delta x + c\Delta t) }
            \left( \frac{ e^{ -a(j+1)^2\Delta x^2 } e^{ ik(j+1)\Delta x } }{ e^{ -a(j-1)^2\Delta x^2 } e^{ ik(j-1)\Delta x } } \right)^{\frac{-c\Delta t}{\Delta x}} \label{eq:PreLog} \\
          &= e^{ -a(j^2\Delta x^2 + 2cj\Delta t\Delta x + c^2 \Delta t^2 } e^{ ik(j\Delta x + c\Delta t) }
            \left( e^{ -2acj\Delta x\Delta t } e^{ -2ick\Delta t } \right) \label{eq:PostLog} \\
          &= e^{ -a(j\Delta x - c \Delta t)^2 } e^{ ik(j\Delta x - c\Delta t) } \\
          &= g(j\Delta x - c\Delta t),
\end{align}
which is the exact solution at $x = j\Delta x$ and time $t=\Delta t$. Using an
inductive argument, it is clear that the
scheme~\eqref{eq:AdvectionMultDiscrete} will recover the exact solution to the
problem with initial condition~\eqref{eq:AdvectionICComplex} for all time
\emph{regardless of discretization, wave speed $c$, and frequency $k$}. A few
remarks are in order for this surprising result.

\begin{remark} \label{rem:SecondOrder}
Taking the logarithm of both the continuous and discrete multiplicative
advection equations shows that the scheme is equivalent to a second-order
finite difference solution of the additive advection equation in the unknown
$\log(u)$. The logarithm of our solution is a second-order polynomial
in $x$ and $t$, which explains why our second-order method will produce the exact
solution. This observation is discussed further in Section~\ref{sec:Projection}.
\end{remark}

\begin{remark} \label{rem:Cheat}
The reader may have noticed that we ``cheated'' in going from
equation~\eqref{eq:PreLog} to~\eqref{eq:PostLog}. Unless $\frac{-c\Delta
t}{\Delta x}$ is an integer, evaluating the exponent requires use of the
complex logarithm. Instead, we are implicitly treating the complex numbers as
points on the Riemann surface on which the complex logarithm is single valued.
We have glossed over the issue in this example for simplicity, leaving the
details to Section~\ref{sec:RiemannSurface}.
\end{remark}

\begin{remark}
Although the exact solution in this example is recovered for any $\Delta x$ and
$\Delta t$, this is only in exact arithmetic --- i.e., the scheme is not
unconditionally stable in the presence of round-off errors or with more
general initial conditions. As in Remark~\ref{rem:SecondOrder}, this is clear
by taking the logarithm of equation~\eqref{eq:AdvectionMultDiscrete} and
recognizing that it is just the leapfrog scheme for the additive advection
equation, which is conditionally stable.
\end{remark}

\begin{remark}
Suppose we wish to solve the above problem with the initial condition
\begin{equation}
  f(x) = 1 + \frac 12 e^{-ax^2}\cos(kx).
\end{equation}
One approach to use the multiplicative calculus would be to write $f(x)$ in the form
$f(x) = r(x)e^{i\theta(x)}$ and use the same techniques from this section.
Unfortunately, nothing is gained by doing that: $r(x)$ in this case would be an
oscillating function of $x$ with frequency $k$, and the accuracy of the scheme
above would be comparable to conventional finite differences for the additive
advection equation; e.g., one would have to use an appropriate number of
points per wavelength for an accurate solution. 

This illustrates two points: First, summation of even seemingly simple
functions can pose challenges to the multiplicative calculus and spoil what would
otherwise be a very efficient method. Second, a suitable multiplicative calculus
formulation is problem dependent. In the present example, the ``correct''
approach would be to subtract 1 from $f$ and proceed as before, then add 1 to
the solution at the end. However, it may be difficult (or impossible) to find a
simple multiplicative calculus version of more complicated problems.
\end{remark}

\section{The multiplicative formulation for PDEs} \label{sec:Formulation}

\subsection{The Riemann surface $e^\C$} \label{sec:RiemannSurface}

The key insight in the example of the previous section was to view the function
$\cos(kx)$ as the real part of $e^{ikx}$ and recognize that the exponential
function is the multiplicative calculus analog of the linear function in
classical calculus. However, as noted in Remark~\ref{rem:Cheat}, it is possible
to spoil the simplicity when raising a complex number to a non-integer power,
for example.  The standard technique to evaluate $z^w$ as a single-valued
function is to choose a branch of the complex logarithm, say, the pricipal
branch $\Log z = \ln|z| + i\mathrm{Arg} z$, $\mathrm{Arg} z \in (-\pi,\pi]$,
and then define $z^w \equiv e^{w\Log z}$.  The problem is that
$\mathrm{Arg}(e^{ikx})$ is not a continuous function of $x$: There is a
discontinuity wherever $kx = (2m+1)\pi$ for any integer $m$. In particular,
care must be taken to numerically evaluate the derivative. For example, a
finite difference approximation of the derivative of $\mathrm{Arg}(e^{ikx})$ at
a point $x$ will require samples of the function in the same neighborhood of
continuity. This is problematic when $k$ is large (requiring a fine mesh to
properly sample each continuous segment) and when $kx$ is close to the ``edge''
of a continuous segment (i.e., when $kx \approx (2n+1)\pi$ for any integer
$n$).

Bashirov and Riza recognized the complications introduced by the logarithm when
they developed complex multiplicative calculus in~\cite{bashirov_complex_2011}.
For example, the multiplicative versions of the product and chain rules involve
raising a function to another function. In that paper, they incorporated the
multivalued complex logarithm into the development of the *calculus. No doubt
motivated in part by the desire for a single valued logarithm,
in~\cite{bashirov_riemann_2016} Bashirov and Norozpour developed complex
multiplicative calculus on the Riemann surface on which log is single valued,
which they called $\mathbb B$. However, whereas their interest was in studying
functions mapping $\mathbb B$ onto itself, applications to problems of physical
interest inherently involve functions of classical real and complex variables.
This is the context in which the development here will proceed.

We begin by defining the Riemann surface $\mathbb B=\{(r,\theta) \,:\, r > 0,\,
\theta \in \R\}$ of polar coordinate pairs on which the complex logarithm is
single-valued. If $z=(r,\theta)$ is in $\mathbb B$, then $\log:\mathbb
B\rightarrow\C$ is defined as
\begin{equation}
\log z = \ln r + i\theta.
\end{equation}
Conversely, the exponential function serves as a mapping from $\C$ to $\mathbb
B$: If $z = a + i b \in \C$, then $\exp:\C \rightarrow \mathbb B$ is defined
as
\begin{equation}
\exp z = (e^a,b).
\end{equation}
In fact, $\exp$ is a bijection from $\C$ to $\mathbb B$ and $\log$ is its
inverse. Therefore, it makes sense to denote the Riemann surface as $\mathbb
B=e^\C$ which is the notation we will prefer henceforth, not only to make the
connection to $\C$ more explicit, but it will also easily generalize to the
case of multiplicative vector spaces.

\begin{remark}
Similarly, we use the notation $e^\R$ to refer to the natural domain
of the real multiplicative calculus, which is simply the set of positive real numbers.
Equivalently, it is also the restriction of $e^\C$ to elements with
zero argument.
\end{remark}

\begin{remark} \label{rem:Notation}
We will generally write $\exp z$ to denote the $e^\C$-valued exponential
function and $e^z$ to denote the complex-valued one. However, for $w \in
e^C$, instead of writing the polar pair $w = (r,\theta)$ we will sometimes
write $w = r e^{i\theta}$ where it will not cause confusion. The function $\log
z$ will never refer to the multi-valued complex logarithm, but always to the
inverse of $\exp$. The complex logarithm will only be used on a specified
branch, such as the principle branch ($\Log$).
\end{remark}

The operations of multiplication, division, and raising to a complex power all
have natural definitions in $e^\C$ --- i.e., if $z_1=(r_1,\theta_1)$
and $z_2=(r_2,\theta_2)$ are in $e^\C$ and $w=u+iv \in \C$,
then
$$ z_1 z_2 = (r_1 r_2,\, \theta_1 + \theta_2) $$
$$ \frac{z_1}{z_2} = \left(\frac{r_1}{r_2},\, \theta_1 - \theta_2 \right) $$
\begin{align*}
  z_1^w &\equiv \exp( w \log z_1 ) \\
        &= \left(r_1^u e^{-\theta_1 v},\, \theta_1 u + \ln(r_1) v \right).
\end{align*}

Another important mapping from $e^\C$ to $\C$ is the projection operator, which
we denote by $\Pr:e^\C \rightarrow \C$. If $z=(r,\theta) \in e^\C$, then the
projection is defined as
\begin{equation}
\Pr z = r e^{i\theta} = r\cos\theta + ir\sin\theta.
\end{equation}
Although $(r,\theta)$ with $r = 0$ is not in $e^\C$, the projection operator is
still well defined for such numbers. We therefore define the closure of $e^\C$
to be $\cl e^\C = \{ (r,\theta) \,:\, r \geq 0, \, -\infty < \theta < \infty
\}$ and extend the domain of definition of the projection operator to be
$\cl e^\C$.

Using the projection, rasing $z_1 \in e^\C$ to a power $z_2 \in e^\C$ is
defined as
$$ z_1^{z_2} \equiv z_1^{\Pr z_2}. $$
Note that, unlike $\log$, $\Pr$ is neither one-to-one nor onto. Nevertheless,
its importance in applications is critical. This is discussed in the next
section.

Similar to the projection, we can map $\C$ into $e^\C$ via an embedding
operator. We define the principal embedding $\Em:\C \rightarrow \cl e^\C$
using the principal branch of the complex argument function:
\begin{equation}
  \Em z = (|z|,\Arg z).
\end{equation}
(An embedding with respect to any other branch can be defined in the same way
as a branch of $\log$.) Clearly, $\Pr(\Em w) = w$ for all $w \in \C$, but
$\Em(\Pr z) = z$ only for $z \in \cl e^\C $ with argument in the range
$(-\pi,\pi]$.

\subsection{The roles of $\log$ and $\Pr$ in applications} \label{sec:Projection}

In the last section, we introduced the mappings $\log$ and $\Pr$, both of which
are $\C$-valued functions of a $e^\C$-valued variable. To see their importance,
we return to the advection equation example in Section~\ref{sec:AdvectionEqn}.
In that section, what we were actually doing was numerically solving the problem
\begin{subequations} \label{eq:AdvectionExpC}
\begin{align}
  v^*_{t}(v^*_{x})^c &= 1, \quad x \in \R, t > 0, \\
  v(x,0) &= \exp( -ax^2 + ikx ),
\end{align}
\end{subequations}
for the function $v:\R\times[0,\infty) \rightarrow e^\C$ defined on the Riemann
surface $e^\C$. As one would expect, the solution is $v(x,t) = \exp( -a(x-ct)^2
+ ik(x-ct) )$. The real part of the \emph{projection} of this function is the
desired solution to the problem we were originally intending to solve, i.e.,
equation~\eqref{eq:AdvectionAdditive} with the initial condition $f(x) =
e^{-ax^2}\cos(kx)$.  

On the other hand, recognizing that $\exp$ is a bijection, we may make the
substitution $v = \exp u$ into equation~\eqref{eq:AdvectionExpC} and after
taking the $\log$ the result is also the classical advection
equation~\eqref{eq:AdvectionAdditive}, but with the initial condition $f(x) =
-ax^2+ikx$ --- which is \emph{not} the problem we wanted to solve. However, it
\emph{does} explain why we would expect such good results in our numerical
scheme. The function $f(x) = -ax^2+ikx$ is simply a quadratic polynomial, so a
second-order finite difference scheme will be exact. Heuristically, due to the
bijection $\exp$, we may expect that the analogous multiplicative scheme
(`second-order finite quotients') will exactly recover the solution to
equation~\eqref{eq:AdvectionExpC}. These observations lead us to propose the
following
\begin{ruleofthumb}
To solve a classical PDE using the multiplicative calculus, we seek a *PDE with 
a $e^\C$-valued solution $v$ such that
\begin{enumerate}[itemsep=0pt, label=\roman*.]
\item $\Pr v$ is the ``solution'' of the the classical PDE,
\item $\log v$ is a ``simple'' function.
\end{enumerate}
\end{ruleofthumb}

As mentioned earlier, the fact that $\exp$/$\log$ is a pair of bijections
between $\C$ and $e^\C$ leads one to suspect that every result in classical
calculus has a counterpart in the multiplicative calculus, since for every $f$
on $e^\C$, there is a function $g$ on $\C$ such that $f = \exp g$. We will in
fact rigorously verify this suspicion in several theorems. But in the end,
it is the \emph{projection} that is important to us because that is the
solution to the physical problem as originally posed in the classical calculus
--- and as mentioned in the previous section, $\Pr$ is neither one-to-one nor
onto. Therefore, establishing results that relate the properties of a function
$f$ on $e^\C$ to its projection in $\C$ is critical. This is the main
motivation for the theory in this paper. To begin, we first develop the
multiplicative calculus on $e^\C$.

\subsection{The multiplicative calculus on $e^\C$} \label{sec:ProductCalculus}

We begin with the limit definition of the *derivative on $e^\C$ analogous
to the one given in Section~\ref{sec:Review} for positive real functions.
Throughout this section, $\Omega$ will be a simply connected domain in
either $\R$ or $\C$ (all results will apply to both cases unless noted
otherwise).

\begin{definition}
A function $f:\Omega \rightarrow e^\C$ is said to be
\textbf{\emph{*differentiable at a point $p_0$}} if
$$ d(p_0) = \lim_{p \rightarrow p_0} \left( \frac{f(p)}{f(p_0)} \right)^{1/(p-p_0)} $$
exists and is finite. In this case, the \textbf{\emph{*derivative}} of $f$ is given by
that limit: $f^*(p_0) = d(p_0)$
\end{definition}

The multiplicative derivative can also be obtained from the additive derivative
by recalling that $\C$ and $e^\C$ are isomorphic. Thus, there exists $g:\Omega
\rightarrow \C$ such that $f = \exp g$ and $g = \log f$. Using the definition
of the *derivative,
\begin{align}
  f^*(p_0) &= \lim_{p \rightarrow p_0} \left( \frac{\exp g(p)}{\exp g(p_0)} \right)^{1/(p-p_0)} \nonumber \\
         &= \lim_{p \rightarrow p_0} \exp \left( \frac {g(p) - g(p_0)} {p - p_0} \right) \nonumber \\
         &= \exp g'(p_0) \nonumber \\
         &= \exp (\log f)'(p_0).
\end{align}
Equivalently, substituting $f = \exp g$ and taking the $\log$ of both sides yields
\begin{equation}
  g'(p_0) = \log (\exp g)^*(p_0).
\end{equation}

If $f(p)=(r(p),\theta(p))$, it follows that 
$$ f^*(p) = ( r^*(p),\, \theta'(p) ). $$
Recalling the chain rule for $\C$-valued differentiable functions, we also have
that
\begin{align*}
  (\log f)' &= (\ln r + i\theta)' \\
            &= \frac{r'}r + i \theta' \\
            &= \frac{r'e^{i\theta} + r(i \theta')e^{i\theta}}{re^{i\theta}} \\
            &= \frac{(\Pr f)'}{\Pr f}.
\end{align*}
Conversely, if $g:\Omega \rightarrow \C$ is non-zero and differentiable at a
point $p$, and $\operatorname{em}$ is a branch of the embedding operator whose
cut does not contain $p$, then
\begin{align*}
  g'(p) &= g(p)\frac{g'(p)}{g(p)} \\
        &= g(p) \log (\operatorname{em} g)^*(p).
\end{align*}
We therefore have the following two results, the first of which relating the
*derivative of a $e^\C$-valued function to the derivatives of its logarithm and
projection, and the second relating the derivative of a $\C$-valued function to
the *derivatives of its exponentiation and embedding. 

\begin{theorem} \label{thm:ProductDifferentiability}
For any $p \in \Omega$ and function $f:\Omega \rightarrow e^\C$, the
following are equivalent:
\begin{enumerate}[itemsep=0pt, label=\roman*.]
\item $f$ is *differentiable at $p$, \label{item:Diff1}
\item $\log f$ is differentiable at $p$, \label{item:Diff2}
\item $\Pr f$ is differentiable at $p$. \label{item:Diff3}
\end{enumerate}
Furthermore, if any of the above are true, then
$$ f^*(p) = \exp (\log f)'(p) = \exp \left( \frac{(\Pr f)'(p)}{\Pr f(p)} \right). $$
\end{theorem}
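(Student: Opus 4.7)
The plan is to prove the two equivalences (i)$\Leftrightarrow$(ii) and (ii)$\Leftrightarrow$(iii) separately, exploiting the two different representations of $f$: as $\exp g$ with $g=\log f$, and as a polar pair $(r,\theta)$ with projection $re^{i\theta}$. Both formulas in the theorem's conclusion will emerge directly from these equivalence computations, so no separate verification is needed.

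For (i)$\Leftrightarrow$(ii), I would follow the computation already displayed just above the statement. Writing $f=\exp g$ with $g=\log f$, the ratio defining the *derivative at $p_0$ becomes $\exp\bigl((g(p)-g(p_0))/(p-p_0)\bigr)$. Because $\exp:\C\to e^\C$ is a homeomorphism with continuous inverse $\log$, the $e^\C$-valued limit exists and is finite precisely when the ordinary limit $g'(p_0)=(\log f)'(p_0)$ exists, and in that case $f^*(p_0)=\exp(\log f)'(p_0)$. This establishes the first formula in the conclusion.

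For (ii)$\Leftrightarrow$(iii), the pivotal identity is $\Pr f = e^{\log f}$, which is immediate from $\Pr(r,\theta)=re^{i\theta}$ and $\exp(a+ib)=(e^a,b)$. For (ii)$\Rightarrow$(iii), the classical chain rule applied to $\Pr f = e^{\log f}$ gives differentiability of $\Pr f$ at $p$ and the relation $(\Pr f)'(p)=\Pr f(p)\cdot(\log f)'(p)$; dividing by $\Pr f(p)\neq 0$ and combining with the first formula yields $f^*(p)=\exp\bigl((\Pr f)'(p)/\Pr f(p)\bigr)$, the second formula in the statement. The converse (iii)$\Rightarrow$(ii) is where I would invoke the observation recorded just before the theorem: a branch $\operatorname{em}$ of the embedding whose cut avoids $\Pr f(p)$ lets us write $\log f = \log\circ\operatorname{em}\circ \Pr f$ locally, so differentiability of $\Pr f$ at $p$ lifts, by the chain rule, to differentiability of $\log f$ at $p$.

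The main obstacle I foresee is a continuity subtlety in (iii)$\Rightarrow$(ii). The identification $\log f = \log\circ\operatorname{em}\circ \Pr f$ only holds locally after selecting the correct sheet of the lift; two values in $e^\C$ that project to the same point in $\C$ differ by an integer multiple of $(1,2\pi)$, so the argument of $f$ could in principle jump by multiples of $2\pi$ while $\Pr f$ remains smooth. To close the argument cleanly I would need $\log f$ to be at least continuous at $p$, which is not a consequence of continuity of $\Pr f$ alone. I expect this continuity is implicit in the paper's convention that $f$ is a genuine $e^\C$-valued map (and is automatically forced at $p$ once (i) holds, since the *derivative limit compels $f(q)/f(p)\to 1$ in $e^\C$), but it is worth stating the assumption explicitly so that the local sheet selection used in (iii)$\Rightarrow$(ii) is well-defined.
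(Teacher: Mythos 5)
Your proof follows the paper's own route exactly: the paper's justification for this theorem is precisely the two displayed computations preceding its statement --- the limit calculation giving $f^*(p_0)=\exp(\log f)'(p_0)$ under the substitution $f=\exp g$, and the chain-rule identity $(\log f)'=(\Pr f)'/\Pr f$ for $f=(r,\theta)$. The continuity/sheet-selection subtlety you flag in (iii)$\Rightarrow$(ii) is well spotted and is not addressed in the paper either (it implicitly treats the polar components $r$ and $\theta$ as the differentiable data), so making the continuity of $f$, equivalently of $\log f$, explicit is a refinement of the paper's argument rather than a gap in yours.
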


\begin{theorem} \label{thm:AdditiveDifferentiability}
For $p \in \Omega$ and any function $g:\Omega \rightarrow \C$, the following
are equivalent:
\begin{enumerate}[itemsep=0pt, label=\roman*.]
\item $g$ is differentiable at $p$,
\item $\exp g$ is *differentiable at $p$.
\end{enumerate}
Furthermore, if any of the above are true, then
$$ g'(p) = \log (\exp g)^*(p). $$
If $g(p) \neq 0$, let $\operatorname{em}:\C \rightarrow e^\C$ be a branch of
the embedding operator such that $p$ does not lie on its branch cut. Then the
above are also equivalent to 
\begin{enumerate}[itemsep=0pt, label=\roman*.]
\setcounter{enumi}{2}
\item $\operatorname{em} g$ is *differentiable at $p$,
\end{enumerate}
and we have
$$ g'(p) = \log (\exp g)^*(p) = g(p) \log (\operatorname{em} g)^*(p) $$
\end{theorem}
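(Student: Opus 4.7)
The plan is to reduce both equivalences directly to Theorem~\ref{thm:ProductDifferentiability} by choosing the appropriate $e^\C$-valued function to feed into that theorem. The key point is that $\exp$ and $\log$ are mutually inverse bijections between $\C$ and $e^\C$, so for any $\C$-valued $g$ we can always set $f = \exp g$ and have $g = \log f$ on the nose.

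First I would establish (i)$\iff$(ii). Setting $f = \exp g$, Theorem~\ref{thm:ProductDifferentiability} gives that $f$ is *differentiable at $p$ iff $\log f = g$ is differentiable at $p$, which is precisely the claimed equivalence. The same theorem yields $f^*(p) = \exp (\log f)'(p)$, i.e.\ $(\exp g)^*(p) = \exp g'(p)$; taking $\log$ of both sides (again using that $\log$ is a genuine inverse of $\exp$ between $\C$ and $e^\C$) produces the stated formula $g'(p) = \log (\exp g)^*(p)$.

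For the second part, assuming $g(p) \neq 0$ and choosing a branch $\operatorname{em}$ of the embedding whose cut avoids $g(p)$, I would set $f = \operatorname{em} g$ and apply Theorem~\ref{thm:ProductDifferentiability} to it. The continuity of $\operatorname{em}$ away from its cut, together with $g(p) \neq 0$, ensures that on some neighborhood $U$ of $p$ the composition $\operatorname{em} g$ is a well-defined, continuous $e^\C$-valued function and that the identity $\Pr \circ \operatorname{em} = \mathrm{id}$ holds throughout $\operatorname{em}(U)$, so $\Pr(\operatorname{em} g) = g$ on $U$. Theorem~\ref{thm:ProductDifferentiability} then gives that $\operatorname{em} g$ is *differentiable at $p$ iff $\Pr(\operatorname{em} g) = g$ is differentiable at $p$, and its formula rewrites as
$$ (\operatorname{em} g)^*(p) = \exp\!\left( \frac{g'(p)}{g(p)} \right), $$
which upon taking $\log$ and multiplying by $g(p)$ produces $g'(p) = g(p) \log (\operatorname{em} g)^*(p)$.

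The main obstacle here is purely of a bookkeeping nature: to invoke Theorem~\ref{thm:ProductDifferentiability} in part (iii) one really needs $\Pr(\operatorname{em} g) = g$ to hold on an open neighborhood of $p$ (not just at the single point), since differentiability is a local notion. This is exactly what the hypotheses $g(p) \neq 0$ and ``$p$ not on the branch cut of $\operatorname{em}$'' are engineered to guarantee, via continuity of $g$ at $p$ and the local inverse property of the chosen branch. Once this local identification is secured, the proof is essentially just bijective translation through $\exp$/$\log$ and Theorem~\ref{thm:ProductDifferentiability}.
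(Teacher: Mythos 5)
Your proposal is correct and follows essentially the same route as the paper: the paper's own justification is the computation preceding the theorem statement, namely that $f^\ast = \exp(\log f)'$ with $f=\exp g$ gives the first equivalence and formula, and that $(\log f)' = (\Pr f)'/\Pr f$ applied to $f=\operatorname{em} g$ (where $\Pr(\operatorname{em} g)=g$) gives $g' = g\,\log(\operatorname{em} g)^\ast$, which is exactly your reduction to Theorem~\ref{thm:ProductDifferentiability}. Your extra care about the identity $\Pr\circ\operatorname{em}=\mathrm{id}$ holding near $p$ is a reasonable (and if anything slightly more explicit) treatment of the role of the branch-cut hypothesis than the paper gives.
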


\begin{remark}
Although they are not used in this article, there are also multiplicative
Cauchy-Riemann conditions for *differentiability similar to the classical
Cauchy-Riemann conditions for complex functions. We refer the reader
to~\cite{bashirov_complex_2011,bashirov_riemann_2016} for details.
\end{remark}

Items~\ref{item:Diff1} and~\ref{item:Diff2} in the above theorems represent
what we alluded will be a recurring theme throughout this paper, i.e., the
equivalence of classical and multiplicative calculi via the isomorphism $\exp$.
However, what we are truly interested in is not the study of functions on
$e^\C$ in isolation, but their projection onto $\C$, as in the motivating
example of Section~\ref{sec:AdvectionEqn}. Item~\ref{item:Diff3} in each of the
above theorems provide the first results in this regard.

In applications, we will want to solve a *PDE with a solution whose projection
is the solution of the classical PDE in question. A more basic question is
whether there even exists an $e^\C$-valued function whose projection is a
specified $\C$-valued function. This is important, for example, in converting
an initial condition to the multiplicative calculus. We give the answer here
only in the case of analytic functions.

\begin{theorem} \label{thm:Analytic}
Let $f:\Omega \rightarrow \C$ be a non-zero analytic function. Then
there exists a *analytic function $g:\Omega \rightarrow e^\C$ such
that $f = \Pr g$
\end{theorem}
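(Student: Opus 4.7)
The plan is to reduce the theorem to the classical fact that a nowhere-vanishing analytic function on a simply connected domain admits an analytic branch of its logarithm. Given such a branch $h:\Omega \to \C$ satisfying $e^{h(p)} = f(p)$ for all $p \in \Omega$ (with $e^{(\cdot)}$ denoting the ordinary complex exponential on $\C$), I would define $g = \exp h : \Omega \to e^\C$, where now $\exp$ is the $e^\C$-valued exponential from Section~\ref{sec:RiemannSurface}. The function $g$ is then the natural candidate to be the desired *analytic lift of $f$.

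To verify that $\Pr g = f$, I would write $h(p) = \alpha(p) + i\beta(p)$ with $\alpha = \operatorname{Re} h$ and $\beta = \operatorname{Im} h$. By the definitions of $\exp:\C \to e^\C$ and $\Pr:e^\C \to \C$,
$$ \Pr g(p) = \Pr\bigl(e^{\alpha(p)},\, \beta(p)\bigr) = e^{\alpha(p)}\bigl(\cos\beta(p) + i\sin\beta(p)\bigr) = e^{h(p)} = f(p). $$
For the *analyticity of $g$, I would invoke Theorem~\ref{thm:AdditiveDifferentiability}: since $h$ is analytic, hence differentiable at every $p \in \Omega$, its $e^\C$-valued exponential $g = \exp h$ is *differentiable at every such $p$, which is the meaning of *analytic on $\Omega$.

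The main obstacle is the first step, the construction of a global analytic branch of $\log f$ on all of $\Omega$. For $\Omega \subset \C$ this is a standard consequence of simple connectedness: the form $(f'/f)\,dp$ is holomorphic and closed on $\Omega$, hence exact, so it admits a primitive $h_0$; fixing a base point $p_0 \in \Omega$ and adjusting the constant of integration so that $e^{h_0(p_0)} = f(p_0)$, the function $f e^{-h_0}$ has vanishing derivative and value $1$ at $p_0$, so $e^{h_0} \equiv f$ on $\Omega$. For $\Omega \subset \R$, the argument is even more elementary since $\Omega$ is an interval: pick a value of $\log f(p_0)$ and extend along the interval by integrating $f'/f$, which produces a real-analytic branch since $f$ is. Once this logarithm exists, the remainder of the proof is merely an unwinding of the definitions of $\Pr$, $\exp$, and the *derivative.
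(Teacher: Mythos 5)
Your proof is correct, but it reaches the conclusion by a genuinely different route than the paper. You invoke the standard complex-analysis fact that a nowhere-vanishing analytic function on a simply connected domain has a global holomorphic logarithm, constructed as a normalized primitive of $f'/f$, and then set $g = \exp h$; the verification that $\Pr g = f$ and the appeal to Theorem~\ref{thm:AdditiveDifferentiability} for *differentiability at every point are exactly the right way to finish. The paper instead argues geometrically: it first treats the case where $f(\Omega)$ avoids some curve $\Gamma$ from the origin to infinity, defining $g = (|f|, \arg_\Gamma f)$ via the corresponding branch of the argument, and otherwise partitions $\Omega$ into subdomains on which $f$ does not wind around the origin and patches the local lifts together by *analytic continuation. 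Your argument is the more rigorous of the two as written --- the paper's second case leaves unjustified why such a partition exists and quietly relies on a monodromy-type statement to globalize, whereas your primitive of $f'/f$ is global from the outset and needs no patching; you also explicitly handle the real-interval case, which the paper's branch-cut language fits less naturally. What the paper's approach buys in exchange is constructive content: the branch-of-argument description, with the $2\pi m_i$ adjustments across subdomain boundaries elaborated after the proof, is what the paper actually uses to build $g$ numerically (as in the Hankel function example), while your $h = \int f'/f$ is cleaner for the existence claim but further from an implementable recipe. Both proofs rest on the same underlying topological fact (simple connectedness permits a single-valued logarithm), so neither is more general; yours is simply the textbook-standard and tighter formalization of it.
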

\begin{proof}
First suppose there exists a continuous curve $\Gamma$ from the origin to
infinity such that the image $f(\Omega)$ does not intersect $\Gamma$. Then
clearly we may take any branch of the argument function whose branch cut is
$\Gamma$, $\mathrm{arg}_\Gamma$, and define $g(z) = ( |f(z)|,
\mathrm{arg}_\Gamma f(z) )$.

If no such branch cut curve exists, we partition $\Omega$ into a set of
subdomains $\Omega_i$ such that $f(\Omega_i)$ does not encircle the origin. By
the preceding paragraph, there is a *analytic function on each subdomain.
Because $\Omega$ is simply connected, we may choose one such subdomain and use
*analytic continuation to construct $g$ on all of $\Omega$.
\end{proof}

Analytic continuation ensures the construction in the proof above, but does not
provide details on how it can be implemented in practice. In many cases, the
simplest way of constructing $g$ is to split $\Omega$ into subdomains
$\Omega_i$ such that $\Arg f$ is onto $(-\pi,\pi]$ in $\Omega_i$
and $\Arg f$ is equal to either $\pm \pi$ on the boundary of the closure of each
$\Omega_i$.  We then define $g_i = \Em f$ on $\Omega_i$. By analytic
continuation, the limits of the $g_i$'s on either side of a boundary between
subdomains have the same modulus and the arguments will differ by an integer
multiple of $2\pi$, since the projection must be continuous. Therefore we
choose one subdomain $\Omega_0$, define $g_0 = \Em f$ on $\Omega_0$, and then
$g_i = g_0 \cdot (1,2\pi m_i)$, where the integers $m_i$ are chosen to ensure
continuity across subdomain boundaries. The following example illustrates the
procedure.

\begin{example}
We consider the Hankel functions $H_n^{(1)}(x)$ and $H_n^{(2)}(x)$ of a
positive real non-zero variable $x$. They are complex-valued functions defined
in terms of the real-valued Bessel funcitons of the first kind, $J_n(x)$, and
second kind, $Y_n(x)$: $H_n^{(1)}(x) = J_n(x) + i Y_n(x)$, $H_n^{(2)}(x) =
J_n(x) - i Y_n(x)$. As functions of $r = \sqrt{x^2+y^2}$ in two dimensions, the
first and second kind Hankel functions represent outgoing and incoming radial
waves, respectively, when multiplied by a factor of the form $e^{-i\omega t}$.

We wish to construct $e^\C$-valued *differentiable functions $G_n^{(1,2)}(x) =
( M_n^{(1,2)}(x), A_n^{(1,2)}(x) )$ such that $\Pr G_n^{(1,2)}(x) =
H_n^{(1,2)}(x)$. The modulus is straightforwardly 
\begin{equation}
  M_n^{(1,2)}(x) = \sqrt{(J_n(x))^2 + (Y_n(x))^2}.
\end{equation}
To obtain the argument $A_n^{(1,2)}$, we recall that the zeros of the Bessel
functions interlace, that the first zero of $J_n$ is less than the first zero
of $Y_n$, and that for $x$ close to $0$, $J_n(x) > 0$ and $Y_n(x) < 0$. Letting
$z_j$ for $j \geq 1$ denote the zeros of $Y_n$ in order of proximity to the
origin, it follows that $\Arg H_n^{(1,2)}(x)$ is continuously differentiable
between even numbered zeros.  Furthermore, for every $z_{2m}$ with positive
integer $m$, the limit of $\Arg H_n^{(1)}(x)$ as $x \rightarrow z_{2n}$ from
the left is $+\pi$, whereas from the right it is $-\pi$. Conversely, the limits
of $\Arg H_n^{(2)}(x)$ as $x$ approaches $z_{2m}$ from the left and right are
$-\pi$ and $+\pi$, respectively. Therefore, we define $A_n^{(1,2)}(x)$ to be
\begin{align}
  A_n^{(1)}(x) &= \Arg( H_n{(1)}(x) ) + 2\pi m \\
  A_n^{(2)}(x) &= \Arg( H_n{(1)}(x) ) - 2\pi m, 
\end{align}
where $m = m(x)$ is the smallest integer such that the zero $z_{2m}$ of
$Y_n(x)$ is greater than $x$ ($m=0$ if $x \leq z_2$).
\end{example}

Having established the basics of continuous multiplicative calculus on $e^\C$,
we now turn to numerical methods. A numerical implementation of multiplicative
PDE solvers will require the concept of vector spaces over $e^\C$, which we
develop next.

\subsection{Vector spaces over $e^\R$ and $e^\C$} \label{sec:VectorSpaces}

A vector space $X$ over a field $K$ requires the operations of multiplication
of vectors by scalars in $K$ and addition of two vectors in $X$. The analogue
in the multiplicative algebra requires properly defined vector multiplication and
exponentiation. 

Let $V$ be such a multiplicative vector space with finite dimension $n$. Given a basis
$B=\{b_1,\dots,b_n\}$, an element $u$ of $V$ can be expressed as an array of
$n$ scalars
$$ u = (u_1,\dots,u_n). $$
As in the case of classical vector spaces where a complex vector is the sum of
a purely real vector and a purely imaginary one, we may write $u = (r,\theta)$
for a vector with components $u_j = (r_j,\theta_j)$, where $r =
(r_1,\dots,r_n)$ and $\theta = (\theta_1,\dots,\theta_n)$.

Anticipating the need for multiplication and exponentiation to be well defined,
the scalars $u_i$ are taken to be in $e^K$. Intuitively, as in the additive
case, we may define vector addition $\oplus$ is defined as componentwise
multiplication and scalar vector multiplication $\otimes$ is defined as
componentwise exponentiation --- that is, if $u = (u_1,\dots,u_n)$ and
$v=(v_1,\dots,v_n)$ are in $V$ and $a$ is in $e^K$, then
\begin{subequations} \label{eq:VectorOps}
\begin{align}
u \oplus v  &= uv  = (u_1v_1,\dots,u_nv_n), \\
a \otimes u &= u^{\log a} = (u_1^{\log a},\dots,u_n^{\log a})
\end{align}
\end{subequations}
are also in $V$. Notice that the $\log$ in the exponentiation is necessary
because the exponent should be an element of $K$, not $e^K$. This also makes
the operation symmetric:
$$ u^{\log a} = (u_1^{\log a},\dots,u_n^{\log a}) = (a^{\log u_1},\dots,a^{\log u_n}) \equiv a^{\log u}. $$

The operations~\eqref{eq:VectorOps} imply that $u = (u_1,\dots,u_n)$ can be
expressed in terms of the basis as
$$ u = \prod_{i=1}^n b_i^{\log u_i}. $$
This is the multiplicative sense in which the scalar $u_i$ is the component of
$u$ in the $b_i$ direction. 

The following result easily follows from the properties of multiplication and
exponentiation in $e^K$.

\begin{proposition}
$V$ defined above is a vector space with vector addition and scalar
multiplication given by $\oplus$ and $\otimes$ in
equations~\eqref{eq:VectorOps}.
\end{proposition}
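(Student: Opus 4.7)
The plan is transport of structure along the componentwise logarithm. Since $\log \colon e^K \to K$ is a bijection with inverse $\exp$ (Section~\ref{sec:RiemannSurface}), the map $\Phi \colon V \to K^n$ defined by $\Phi(u) = (\log u_1, \dots, \log u_n)$ is a bijection between $V$ and the standard $n$-dimensional coordinate space over $K$.

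First I would verify that $\Phi$ intertwines the multiplicative operations on $V$ with the ordinary vector operations on $K^n$. Unpacking definitions~\eqref{eq:VectorOps},
$$\Phi(u \oplus v) = (\log(u_i v_i))_{i=1}^{n} = (\log u_i + \log v_i)_{i=1}^{n} = \Phi(u) + \Phi(v),$$
$$\Phi(a \otimes u) = (\log(u_i^{\log a}))_{i=1}^{n} = ((\log a)\log u_i)_{i=1}^{n} = (\log a)\,\Phi(u),$$
where on the right, $+$ and scalar multiplication are the standard operations on $K^n$ with scalars in $K$. Implicit in the second identity is that $e^K$ carries the field structure inherited from $K$ via $\log/\exp$: its additive operation is multiplication in $e^K$, its multiplicative operation sends $(a,b)$ to $\exp(\log a \cdot \log b)$, the additive identity is $1$, and the multiplicative identity is $\exp 1$.

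Each of the eight vector space axioms for $(V,\oplus,\otimes)$ over $e^K$ then follows by applying $\Phi$ to both sides and invoking the corresponding axiom for $(K^n,+,\cdot)$ over $K$, together with injectivity of $\Phi$. For example, the additive identity in $V$ is $(1,\dots,1)$; the additive inverse of $u$ is $(u_1^{-1},\dots,u_n^{-1})$; and the scalar identity is $\exp 1$, since $\exp 1 \otimes u = u^{\log \exp 1} = u^1 = u$. Associativity, commutativity, and the two distributive laws reduce immediately to the intertwining identities above. There is no real obstacle in this argument; the only subtlety worth flagging at the outset is the field structure on $e^K$ itself, so that one does not confuse the scalar-field operations with the vector-space operations $\oplus$ and $\otimes$. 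Once this is clarified, the proposition amounts to the observation that $\Phi$ is a linear isomorphism to $K^n$.
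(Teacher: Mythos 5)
Your proof is correct and matches the approach the paper intends: the paper states the proposition without proof, remarking only that it ``easily follows from the properties of multiplication and exponentiation in $e^K$,'' and immediately afterwards develops the very componentwise $\exp/\log$ isomorphism between $X$ and $e^X$ that your map $\Phi$ realizes. Your explicit verification of the intertwining identities, together with the (correctly flagged) point that the scalar set $e^K$ must carry the field structure transported from $K$ so that $1$ is the additive identity and $\exp 1$ the multiplicative identity, simply fills in the details the paper omits.
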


Note that the role of the origin in a multiplicative vector space is played by the
vector whose components is all ones. We will denote this vector simply as $1$.

Suppose we wish to represent the array $u = (u_1,\dots,u_n)$ in a different basis
$B'=\{b_1',\dots,b_n'\}$.  For each $b_j$ in $B$, there are scalars
$a_1,\dots,a_n \in e^K$ such that $b_j = \prod_i (b_i')^{\log a_{ij}}$. It follows that $u$
in the basis $B'$ is given by
$$ \left( \prod_{i=1}^n u_j^{\log a_{1j}},\dots,\prod_{i=1}^n u_j^{\log a_{nj}} \right) \equiv u^{\log A}, $$
which is the multiplicative analog of matrix-vector multiplication.

An alternative construction of multiplicative vector spaces is to start with an
additive one $X$ with a basis representation $x = (x_1,\dots,x_n)$  and
defining $\exp x$ componentwise, i.e., $\exp x = (\exp x_1,\dots,\exp x_n)$.
For any linear operator $A':X \rightarrow X$, it follows from the change of
basis formula that $\exp(A'x) = (\exp x)^{A'}$. Identifying $A'$ with $\log A$
in the preceding paragraph, it is clear that the definition of $\exp x$ does
not depend on the basis and it is natural to define the space
$$ e^X \equiv \{ v \,:\, v = \exp x,\, x \in X \}. $$
Recalling that scalar-valued $\exp$ is a bijection from $K$ to $e^K$ and $\log$
is its inverse, it follows that componentwise $\log$ is the inverse of the
vector-valued $\exp$ defined above and $X$ is isopmorphic to $e^X$.

Now let $X$ be a vector space endowed with a norm $||\cdot||$.  Then $\exp$ and
$\log$ naturally induce a norm $||\cdot||_*$ in $e^X$:
\begin{equation} \label{eq:StarNorm}
||u||_* = \exp || \log u ||.
\end{equation}
Clearly, $||\cdot||_*$ is a properly defined norm in the multiplicative sense
if and only if $||\cdot||$ is a properly defined norm in the additive sense ---
i.e., for $u,\,v \in e^X$ and $\alpha \in K$, if $||\cdot||$ is a norm we have
\begin{enumerate}[itemsep=0pt, label=\roman*.]
\item $||u||_* \geq 1$ and $||u||_* = 1$ if and only if $u = 1$,
\item $||u^\alpha||_* = ||u||_*^{|\alpha|}$,
\item $||uv||_* \leq ||u||_*||v||_*$.
\end{enumerate}

\begin{remark}
Similarly, any inner product $(\cdot,\cdot)$ in the Euclidean space $K^n$
induces the inner product
$$ (u,v)_* \equiv \exp (\log u,\log v) $$
in the multiplicative Euclidean space $(e^K)^n = e^{nK}$.
\end{remark}

Typically, two normed spaces $X$ and $Y$ are said to be isometrically
isomorphic if there is an isomorphism $\phi:X\rightarrow Y$ such that the norms
of $x$ and $\phi(x)$ are equal for every $x$. However, we do not expect
the same definition to hold for an isomorphism between an additive normed space
and a multiplicative one because of the fundamental difference in the
underlying fields $K$ and $e^K$. (For example, the group identity is $0$
in the former and $1$ in the latter.) Therefore, we extend the definition of
isometric isomorphism to account for this case.
\begin{definition} \label{def:Isomorphic}
Let $X$ and $Y$ be normed spaces over the respective fields $K$ and $L$ with
respective norms $||\cdot||_X:X \rightarrow K$ and $||\cdot||_Y:Y \rightarrow
L$, where the arithmetic in $L$ can be generated from the arithmetic in $K$ by
the function $\alpha:K \rightarrow L$. Then $X$ and $Y$ are
\textbf{\emph{isometrically isomorphic}} if there exists an isomorphism $\phi:X
\rightarrow Y$ such that $\alpha(||x||_X) = ||\phi(x)||_Y $ for every $x \in
X$.
\end{definition}
This leads to the following fundamental result for multiplicative vector
spaces.

\begin{theorem} \label{thm:Isomorphic}
The space $X$ with norm $||\cdot||$ is isometrically isomorphic to the normed
space $e^X$ under the mapping $\exp$ ($\phi = \exp:e^X \rightarrow X$ and
$\alpha = \exp:K \rightarrow e^K$ in Definition~\ref{def:Isomorphic}) with the
norm in $e^X$ given by~\eqref{eq:StarNorm} --- i.e., $\exp ||x|| = ||\exp
x||_*$ for each $x\in X$ (equivalently, $\log ||v||_* = ||\log v||$ for each $v
\in e^X$).
\end{theorem}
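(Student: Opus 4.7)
The plan is to verify the two requirements of Definition~\ref{def:Isomorphic} in turn: that $\exp$ is a vector space isomorphism, and that it preserves norms in the generalized sense with $\alpha = \exp$. The domain/codomain in the theorem statement should be read as $\phi = \exp:X \to e^X$ (consistent with the earlier discussion of $e^X$ as the image of $X$ under componentwise $\exp$), and the inverse is componentwise $\log$.

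First I would handle the algebraic isomorphism. Bijectivity is already noted in the text: componentwise $\exp$ from $X$ to $e^X$ has componentwise $\log$ as its inverse. To check linearity in the multiplicative sense, take $x,y \in X$ and $\alpha \in K$, and verify componentwise that
\begin{align*}
\exp(x+y) &= (\exp(x_1+y_1),\dots,\exp(x_n+y_n)) \\
&= (\exp(x_1)\exp(y_1),\dots,\exp(x_n)\exp(y_n)) = \exp(x) \oplus \exp(y),
\end{align*}
using the definition of $\oplus$ in~\eqref{eq:VectorOps}. For scalar multiplication, I would compute
\[
(\exp \alpha) \otimes (\exp x) = (\exp x)^{\log \exp \alpha} = (\exp x)^\alpha = (\exp(\alpha x_1),\dots,\exp(\alpha x_n)) = \exp(\alpha x),
\]
again using~\eqref{eq:VectorOps} and the definition of exponentiation in $e^K$ from Section~\ref{sec:RiemannSurface}. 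Together these show that $\exp$ is a bijective homomorphism, i.e., an isomorphism between the $K$-vector space $X$ and the $e^K$-vector space $e^X$.

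Next, the isometric condition is essentially built into the definition~\eqref{eq:StarNorm} of $\|\cdot\|_*$. For any $x \in X$, setting $v = \exp x$ (so $\log v = x$) gives
\[
\|\exp x\|_* = \|v\|_* = \exp \|\log v\| = \exp \|x\|,
\]
which is exactly $\alpha(\|x\|) = \|\phi(x)\|_*$ for $\alpha = \phi = \exp$. The equivalent statement $\log \|v\|_* = \|\log v\|$ for $v \in e^X$ follows by applying $\log$ to both sides.

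I do not anticipate any real obstacle here; the content of the result is that the definitions of $\oplus$, $\otimes$, and $\|\cdot\|_*$ were constructed precisely so that $\exp$ would transport the additive structure of $X$ to the multiplicative structure of $e^X$. The only subtlety worth flagging is the mild abuse in the theorem statement where $\exp$ simultaneously names the scalar map $K \to e^K$ (the role of $\alpha$) and the vector map $X \to e^X$ (the role of $\phi$); once one keeps these straight, the verifications above are immediate from the componentwise definitions.
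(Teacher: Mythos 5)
Your proof is correct and is essentially the argument the paper intends: the paper states Theorem~\ref{thm:Isomorphic} without an explicit proof, treating it as immediate from the construction of $\oplus$, $\otimes$, and the norm~\eqref{eq:StarNorm}, and your componentwise verification of the homomorphism properties plus the one-line isometry check is exactly that omitted verification. You are also right that the stated direction $\phi = \exp : e^X \rightarrow X$ is a typo for $\exp : X \rightarrow e^X$.
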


Theorem~\ref{thm:Isomorphic} allows all the results and techniques of
finite-dimensional additive vector spaces to be extended directly to
multiplicative vector spaces. In effect, every theorem and method in numerical
linear algebra and numerical differential equations has its multiplicative
counterpart.  Again, as in the case of Theorem~\ref{thm:Analytic},
Theorem~\ref{thm:Isomorphic} implies that nothing new is to be gained from
multiplicative spaces in isolation, since they are essentially additive vector
spaces in disguise. We therefore turn our attention to the projection of $e^X$
onto $X$, which we define as componentwise projection --- that is, for $u =
(u_1,\dots,u_n) \in e^X$ the projection $\Pr u \in X$ is given by
$$ \Pr u = (\Pr u_1,\dots,\Pr u_n). $$
As with the scalar projection operator, we may extend the definition to the
closure of $e^X$, $\cl e^X = \{ (u_1,\dots,u_n) \,:\, u_j \in \cl e^\C \}$.

Now consider $u \in e^X$ with a *norm given by $||u||_*$. What can be said
about the norm of its projection in $X$, $||\Pr u||$? The answer depends on the
norm.

\begin{example}
Let $|\cdot|$ be the absolute value in $\R$ and define the *absolute
value for $y \in e^\R$ as 
\begin{equation}
  |y|_* = \exp|\log y| = 
  \begin{cases}
    y, & y \geq 1 \\
    1/y, & y < 1. 
  \end{cases}
\end{equation}
Clearly, $|\Pr y| \leq |y|_*$ and equality holds for every $y \geq 1$.

Now consider the $\infty$-norm for $x=(x_1,\dots,x_n) \in \R^n$ given by
$||x||_\infty = \max_i |x_i|$ and the induced $*\infty$-norm 
$$ ||u||_{*\infty} = \exp \left( \max_i |\log u_i| \right) = \max_i |u_i|_*. $$
It follows directly from the inequality for the *absolute value that
$||\Pr u||_\infty \leq ||u||_{*\infty}.$ for all $u$ in $e^{n\mathbb
R}$.

This result also carries over to the complex case. The *absolute value of $u =
(r,\theta) \in e^\C$ is $|u|_* = \exp |\log u| = \exp \sqrt{ (\ln r)^2
+ \theta^2 }$. Clearly, $|u|_*$ for non-zero $\theta$ is greater than when
$\theta = 0$, which is simply the real case. Moreover, $|\Pr u | = r$
regardless of the value of $\theta$, which is also a reduction to the real
case. It follows that for any $u$ in $e^{n\C}$, we have
$||\Pr u||_\infty \leq ||u||_{*\infty}$. 
\end{example}

\begin{example}
Now consider the spaces $\R^n$ and $e^{n\R}$, $n \geq 2$ under the 2-norm and
*2-norm repsectively. Consider $x = (x_1,\dots,x_n) \in e^{n\R}$ with $x_j = r
\geq 1$ for all $j$. Then $||\Pr x||_2 = \sqrt{n}r$ and
$$||x||_{*2} = \exp( \sqrt{n(\ln r)^2} ) = r^{\sqrt n}.$$
$||x||_{*2}$ clearly grows faster as a function of $r$, so for $r$
sufficiently large $||x||_{*2} > ||\Pr x||_2$. However, for $r = 1$ we have $1
= ||x||_{*2} < ||\Pr x||_2 = \sqrt{n}$.
\end{example}

As the previous example illustrates, we may not be able to say that $||\Pr u||
\leq ||u||_*$ or vice versa for all $u \in e^{X}$ in general. However, recall
that zero is a singularity in multiplicative spaces, which follows from the
property of the *norms that $||u^\alpha||_* = ||u||^{|\alpha|}$ for any $\alpha
\in \C$. Therefore, the *norm is unbounded as $\alpha \rightarrow -\infty$
while on the other hand $||\Pr u^\alpha|| \rightarrow 0$ since each component
tends to zero. It follows that for any norm and *norm (not necessarily
induced), $||\Pr u|| \leq ||u||_*$ if $\Pr u$ is sufficiently close to zero.

In the context of numerical analysis, perhaps the most pertinent information we
may wish to know is what a numerical approximation in a multiplicative space
can tell us about the corresponding approximation when projected into the
additive space. The main result of this section is that a multiplicative
estimate implies a \emph{relative} additive estimate. The intuition comes from
considering two positive real numbers: if $0<a<b$ then $b - a = a\,( b/a - 1
)$. The result for general vector spaces requires the following lemmas.

\begin{lemma} \label{lem:ScalarMixedProduct}
Let $z \in \C$ and $w \in e^\C$. Then $z\Pr w = \Pr(w\Em z)$.
\end{lemma}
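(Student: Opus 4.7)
The plan is to prove the identity by direct computation, unwinding both sides using the definitions of $\Pr$, $\Em$, and the product in $e^\C$ given in Section~\ref{sec:RiemannSurface}, and then invoking the standard polar representation of a complex number.

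First I would write $w = (r,\theta)$ with $r > 0$ so that, by definition, $\Pr w = r e^{i\theta}$; the left-hand side is then simply $z \Pr w = z\,r e^{i\theta}$. For the right-hand side, I would use the principal embedding $\Em z = (|z|, \Arg z)$ and the product rule in $e^\C$, which gives $w\,\Em z = (r|z|,\,\theta + \Arg z)$. Applying $\Pr$ to this yields
$$ \Pr(w\,\Em z) = r|z|\,e^{i(\theta + \Arg z)} = (r e^{i\theta})(|z|\,e^{i\Arg z}). $$

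The last step is to invoke the elementary identity $z = |z|\,e^{i \Arg z}$ valid for every $z \in \C$ (and trivially for $z = 0$ since the modulus factor forces both sides to vanish). Substituting gives $\Pr(w\,\Em z) = r e^{i\theta}\cdot z = z\,\Pr w$, matching the left-hand side.

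This proof is essentially mechanical, and I do not anticipate a real obstacle: all the work is done by carefully composing the three definitions. The single conceptual point worth emphasizing is that the argument hinges specifically on $\Em$ being a right-inverse of $\Pr$ (so that $|z|e^{i\Arg z}$ reassembles into $z$); it would fail for a generic $e^\C$-lift of $z$, reflecting the earlier remark that $\Em(\Pr z) = z$ only holds when $z$ has argument in $(-\pi,\pi]$. This observation also explains why the lemma mixes $\C$-multiplication on the left with $e^\C$-multiplication on the right in a non-trivial way.
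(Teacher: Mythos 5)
Your proof is correct and follows essentially the same route as the paper's: both arguments write $w=(r,\theta)$, express $z$ in polar form via $|z|$ and $\Arg z$, unwind the definitions of the $e^\C$-product and $\Pr$, and dispose of $z=0$ as a trivial case. The only difference is notational (the paper writes $z=pe^{i\alpha}$ where you write $|z|e^{i\Arg z}$), so no further comment is needed.
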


\begin{proof}
If $z = 0$ the equality is trivially satisfied. Otherwise, let $w = (r,\theta)$
and $z = p e^{i\alpha}$ with $p > 0$ and $\alpha \in (-\pi,\pi]$. Then
\begin{align*}
  z\Pr w &= rpe^{i(\theta+\alpha)} \\
         &= \Pr( (rp,\theta+\alpha) ) \\
         &= \Pr( w\Em z ).
\end{align*}
\end{proof}

\begin{lemma} \label{lem:Sum}
$\Pr u +\Pr v  = \Pr( u\Em( 1 + \Pr(v/u) ) )$ for all $u$ and $v$ in $e^X$.
\end{lemma}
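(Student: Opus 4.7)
The plan is to reduce the vector identity to a scalar identity using that $\Pr$ on $e^X$ is defined componentwise, and then rearrange the scalar sum $\Pr u + \Pr v$ into a form where Lemma~\ref{lem:ScalarMixedProduct} can be applied directly.

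In the scalar case, with $u,v \in e^\C$, I would first factor $\Pr u + \Pr v = \Pr u\,(1 + \Pr v / \Pr u)$. This is legitimate because the condition $u \in e^\C$ forces the modulus of $u$ to be positive, hence $\Pr u \neq 0$. A short check from the definition of division in $e^\C$ shows that projection commutes with division, i.e., $\Pr(v/u) = \Pr v/\Pr u$, so the sum becomes $(1 + \Pr(v/u))\,\Pr u$. At this point the right-hand side has exactly the form $z\,\Pr w$ treated in Lemma~\ref{lem:ScalarMixedProduct}, with $z = 1 + \Pr(v/u) \in \C$ and $w = u \in e^\C$. Applying that lemma converts $z\,\Pr u$ into $\Pr(u\,\Em z) = \Pr\bigl(u\,\Em(1 + \Pr(v/u))\bigr)$, which is exactly the desired identity. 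The degenerate case $z = 0$, occurring when $\Pr v = -\Pr u$, is already handled by the case split in Lemma~\ref{lem:ScalarMixedProduct}, so no separate argument is required there.

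The vector version follows immediately by writing $u$ and $v$ in coordinates and applying the scalar identity in each component, since every operation appearing in the statement---projection, embedding, multiplication, and division---acts coordinatewise on $e^X$. There is essentially no substantial obstacle here; the only points that deserve a sentence of justification are that $\Pr u$ is nonzero (so that dividing by it is legal) and that $\Em$ is defined on all of $\C$ via its extension to $\cl e^\C$ (so that the argument $1 + \Pr(v/u)$, which may be zero, poses no difficulty).
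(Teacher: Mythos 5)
Your proposal is correct and follows essentially the same route as the paper: both arguments reduce to components, factor $\Pr u_j + \Pr v_j = \bigl(1 + \Pr(v_j/u_j)\bigr)\Pr u_j$ using that the modulus of $u_j$ is positive, and then invoke Lemma~\ref{lem:ScalarMixedProduct} to rewrite this as $\Pr\bigl(u_j\Em(1+\Pr(v_j/u_j))\bigr)$. The only cosmetic difference is that the paper carries out the factorization explicitly in polar coordinates $(r_j,\theta_j)$, $(s_j,\phi_j)$ rather than citing the identity $\Pr(v/u)=\Pr v/\Pr u$ abstractly.
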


\begin{proof}
Let the components of $u$ and $v$ be $u_j = (r_j,\theta_j)$ and $v_j =
(s_j,\phi_j)$, respectively, and let $x = \Pr u + \Pr v$ and $y = \Pr(u\Em( 1 +
\Pr(v/u) ) )$. We prove that $x_j = y_j$ for all $j$. Indeed,
\begin{align*}
x_j &= r_j e^{i\theta_j} + s_j e^{i\phi_j} \\
    &= r_j e^{i\theta_j} \left( 1+ \frac{s_j}{r_j} e^{i(\phi_j - \theta_j)} \right).
\end{align*}
On the other hand, by Lemma~\ref{lem:ScalarMixedProduct}, 
\begin{align*}
y_j &= \Pr( u_j \Em( 1 + \Pr(v_j/u_j) ) ) \\
    &= ( 1 + \Pr(v_j/u_j) ) \Pr u_j \\
    &= \left( 1+ \frac{s_j}{r_j} e^{i(\phi_j - \theta_j)} \right) r_j e^{i\theta_j} \\
    &= x_j.
\end{align*}
\end{proof}

\begin{lemma} \label{lem:Product}
$||\Pr(uv)|| \leq C||\Pr u||\,||\Pr v||$ for all $u$ and $v$ in $e^X$, where
the constant $C>0$ is such that $||x||_\infty \leq C||x||$ for all $x \in X$.
\end{lemma}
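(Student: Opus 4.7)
The plan is to first identify $\Pr(uv)$ componentwise as the Hadamard product of $\Pr u$ and $\Pr v$, and then reduce the norm estimate to a routine bound involving the $\infty$-norm.

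First, I would unpack the multiplication in $e^\C$ componentwise. Writing the polar representations $u_j = (r_j,\theta_j)$ and $v_j = (s_j,\phi_j)$, the product rule on $e^\C$ gives $u_j v_j = (r_j s_j,\,\theta_j+\phi_j)$, so
$$
  \Pr(u_j v_j) = r_j s_j\, e^{i(\theta_j+\phi_j)} = \Pr(u_j)\,\Pr(v_j).
$$
Hence $\Pr(uv) = \Pr u \odot \Pr v$, where $\odot$ denotes the entrywise (Hadamard) product in $X$. So the lemma reduces to a norm estimate for a Hadamard product of ordinary vectors.

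Second, I would bound this Hadamard product. Componentwise, $|(\Pr u \odot \Pr v)_j| = |(\Pr u)_j|\,|(\Pr v)_j| \leq ||\Pr u||_\infty\,|(\Pr v)_j|$. For a norm that respects componentwise absolute-value inequalities (as is standard in applications, e.g.\ every $\ell^p$ norm), this yields $||\Pr(uv)|| \leq ||\Pr u||_\infty\,||\Pr v||$. Applying the hypothesized equivalence $||\Pr u||_\infty \leq C\,||\Pr u||$ then delivers the claim with constant $C$.

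The main obstacle I foresee is making the implication ``$|a_j| \leq g_j$ componentwise $\Rightarrow ||a|| \leq ||g||$'' precise: this is automatic for monotone (absolute) norms but requires care otherwise. If monotonicity is not available, one can instead bound $||\Pr(uv)|| \leq C'\,||\Pr(uv)||_\infty$ by the reverse direction of norm equivalence in finite dimensions, then use $||\Pr(uv)||_\infty \leq ||\Pr u||_\infty\,||\Pr v||_\infty$ together with $||\cdot||_\infty \leq C\,||\cdot||$ on each factor, arriving at the same inequality after absorbing the extra factor into the constant. In either case, the substantive content of the lemma is the componentwise factorization in the first step, while the norm comparison is a routine finite-dimensional bookkeeping step.
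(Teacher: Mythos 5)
Your proof is correct and follows essentially the same route as the paper's: factor the projection componentwise so that $\Pr(uv)$ is the entrywise product of $\Pr u$ and $\Pr v$, view the result as a weighted norm of $\Pr v$ with weights bounded by $||\Pr u||_\infty$, and finish with $||\Pr u||_\infty \leq C||\Pr u||$. The only difference is that you explicitly flag the monotonicity assumption on the norm needed for the step $||\Pr u \odot \Pr v|| \leq ||\Pr u||_\infty\,||\Pr v||$ (and offer a fallback), whereas the paper leaves that assumption implicit.
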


\begin{proof}
Let $\Pr u = x = (x_1,\dots,x_n)$ and $\Pr v = y = (y_1,\dots,y_n)$. For
scalars in $e^\C$, the projection of a product is equal to the product of
projections. This property clearly extends to vectors componentwise, so that
$||\Pr(uv)|| = ||(x_1y_1,\dots,x_ny_n)||$, which can be viewed as the weighted
norm of $y$ with weights $x$. Letting $M = \max_j |x_j| = ||x||_\infty$ we have
$||\Pr(uv)|| \leq M ||y|| \leq C||x||\,||y||$.
\end{proof}

Note in particular that $C=1$ for any $p$-norm $||x||_p = \left( \sum_j |x_j|^p
\right)^{1/p}$.

\begin{lemma} \label{lem:Difference}
$|| \Pr u  - 1 || \leq C^{-1} (||u||_*^C - 1)$ for all $u \in e^X$, where
the constant $C$ is the same as in Lemma~\ref{lem:Product}.
\end{lemma}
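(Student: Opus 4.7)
The plan is to expand $\Pr u - 1$ as an infinite Taylor series obtained by componentwise exponentiation, and bound the norm of each term by iterating the weighted-norm inequality that underlies Lemma~\ref{lem:Product}.

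First, I would set $y = \log u \in X$, so that Theorem~\ref{thm:Isomorphic} gives $||y|| = \log ||u||_*$, and note that componentwise $\Pr(\exp y_j) = e^{y_j}$ (ordinary complex exponential), so the $j$-th entry of $\Pr u - 1$ is $e^{y_j} - 1$. Expanding each entry by its Taylor series then produces the vector identity
\[
\Pr u - 1 \;=\; \sum_{k=1}^\infty \frac{1}{k!}\, y^{(k)}, \qquad y^{(k)} := (y_1^k,\dots,y_n^k).
\]

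The next step is the bound $||y^{(k)}|| \leq C^{k-1}||y||^k$. Here I would recycle the argument of Lemma~\ref{lem:Product}, which applies verbatim to any pair of vectors $a,b \in X$: since $(a_1 b_1,\dots,a_n b_n)$ is the weighted norm of $a$ with weights $b_j$, one has $||(a_1 b_1,\dots,a_n b_n)|| \leq ||b||_\infty\,||a|| \leq C||a||\,||b||$. Taking $a = y^{(k-1)}$ and $b = y$ and inducting on $k$ gives the claim.

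Finally, the bounding series converges absolutely, which justifies applying the triangle inequality termwise, and I would compute
\[
||\Pr u - 1|| \;\leq\; \sum_{k=1}^\infty \frac{||y^{(k)}||}{k!} \;\leq\; \sum_{k=1}^\infty \frac{C^{k-1}||y||^k}{k!} \;=\; \frac{1}{C}\bigl(e^{C||y||} - 1\bigr) \;=\; C^{-1}\bigl(||u||_*^C - 1\bigr),
\]
using $||y|| = \log ||u||_*$ in the last step. The main conceptual hurdle is simply deciding to Taylor-expand componentwise and then iterate the weighted-norm estimate on the powers $y^{(k)}$; once that choice is made, every estimate is routine, and the exponential factor in $||u||_*^C$ emerges naturally from the factorial denominators in the exponential series.
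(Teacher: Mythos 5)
Your proof is correct and essentially identical to the paper's: the paper also writes the $k$-th entry of $\Pr u - 1$ as the Taylor series of $e^{\log u_k}-1$, collects the componentwise powers into vectors $\psi^m$ (your $y^{(k)}$ with $\psi = \log u$), bounds $||\psi^m|| \leq C^{m-1}||\psi||^m$ by iterating the weighted-norm estimate of Lemma~\ref{lem:Product}, and sums the exponential series. Your remark that the weighted-norm argument applies verbatim to arbitrary vectors in $X$ (not just projections) is a slightly more careful justification of the inductive step than the paper's direct citation of Lemma~\ref{lem:Product}, but the substance is the same.
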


\begin{proof}
Let $u = (u_1,\dots,u_n)$ and $u_k = (r_k, \theta_k)$. Then
$$ (\Pr(u) - 1)_k = e^{\ln r_k + i\theta_k} - 1 = \sum_{m=1}^\infty \frac 1{m!} (\ln r_k + i\theta_k)^m. $$
Let $\psi^m$ denote the vector whose entries are $(\psi^m)_k = (\ln r_k +
i\theta_k)^m$ and $\psi = \psi^1$. Then $\Pr(u)-1 = \sum_{m=1}^\infty \frac
1{m!} \psi^m$ and it follows from the triangle inequality that
$$ ||\Pr(u) - 1|| = \left|\left| \sum_{m=1}^\infty \frac 1{m!} \psi^m \right|\right| \leq \sum_{m=1}^\infty \frac 1{m!} || \psi^m ||. $$
By Lemma~\ref{lem:Product}, $||\psi^m|| \leq C^{m-1}||\psi||^m$ and we have
$$ ||\Pr(u) - 1|| \leq \frac 1C \sum_{m=1}^\infty \frac 1{m!} (C||\psi||)^m = \frac 1C \left( e^{C||\psi||} - 1 \right) = \frac 1C (||u||_*^C - 1). $$
\end{proof}

\begin{theorem} \label{thm:Estimate}
Let $u$ and $v$ be in $e^X$. Then 
$$ \frac{||\Pr u - \Pr v ||}{||\Pr u||} \leq \left|\left|\frac vu \right|\right|_*^C - 1, $$
where the constant $C$ is the same as in Lemma~\ref{lem:Product}.
\end{theorem}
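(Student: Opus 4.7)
The plan is to reduce the bound to the two ingredients already established: the componentwise product estimate underlying Lemma~\ref{lem:Product}, and the small-remainder estimate of Lemma~\ref{lem:Difference}. The guiding identity is the scalar factorization $b - a = a(b/a - 1)$ alluded to in the paragraph preceding the theorem: the goal is to factor $\Pr u$ out of $\Pr v - \Pr u$ and then control the remainder using the $*$-norm of the ratio $v/u$.

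Concretely, I would first set $w = v/u \in e^X$, the componentwise quotient. Because $\Pr(z_1 z_2) = \Pr z_1 \cdot \Pr z_2$ for scalars in $e^\C$ and because $v = uw$ componentwise in $e^X$, each entry of $\Pr v - \Pr u$ equals $\Pr u_k\,(\Pr w_k - 1)$, so that $\Pr v - \Pr u \in X$ is exactly the componentwise product of $\Pr u$ and $\Pr w - 1$. Extracting from the proof of Lemma~\ref{lem:Product} the purely additive inequality $||(x_k y_k)_k|| \leq ||x||_\infty\,||y|| \leq C\,||x||\,||y||$ and applying it with $x = \Pr u$, $y = \Pr w - 1$, I obtain
$$ ||\Pr v - \Pr u|| \;\leq\; C\,||\Pr u||\,||\Pr w - 1||. $$
Lemma~\ref{lem:Difference} applied to $w$ then bounds the last factor by $C^{-1}(||w||_*^C - 1)$. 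The two factors of $C$ cancel, and dividing by $||\Pr u|| > 0$ (strictly positive because every component of $u$ lies in $e^\C$ and so has positive modulus) yields the claimed inequality after noting the trivial symmetry $||\Pr u - \Pr v|| = ||\Pr v - \Pr u||$.

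The only non-mechanical point, and thus the expected main obstacle, is recognizing that Lemma~\ref{lem:Product} cannot be cited verbatim with $\Pr w - 1$ as its second argument, since this vector need not be the projection of any element of $e^X$ (its entries may vanish, whereas components of $e^X$ are strictly nonzero). The resolution is not to apply the lemma as a black box but rather to isolate from its proof the underlying weighted-norm inequality for componentwise products of two \emph{arbitrary} vectors in $X$, which is what the proof actually establishes even though the statement phrases it through $\Pr$. Once this identification is made, the entire argument reduces to a two-step concatenation of existing lemmas combined with the elementary componentwise factorization $\Pr v - \Pr u = \Pr u \cdot (\Pr(v/u) - 1)$.
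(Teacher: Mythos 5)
Your proof is correct and follows essentially the same route as the paper's: the factorization $\Pr v - \Pr u = \Pr u\cdot(\Pr(v/u)-1)$, the weighted-norm bound behind Lemma~\ref{lem:Product}, and Lemma~\ref{lem:Difference}, with the two factors of $C$ cancelling. The only difference is presentational: the paper wraps the second factor in $\Em$ (via Lemma~\ref{lem:ScalarMixedProduct}) so as to cite Lemma~\ref{lem:Product} on (the closure of) $e^X$, whereas you extract the underlying componentwise inequality $||(x_ky_k)_k|| \leq C||x||\,||y||$ in $X$ directly --- which is in fact how the paper itself already uses that lemma inside the proof of Lemma~\ref{lem:Difference}.
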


\begin{proof}
Using Lemmas~\ref{lem:ScalarMixedProduct} and~\ref{lem:Product}, 
$$ || \Pr u - \Pr v|| = \left|\left| \Pr \left( u \Em( \Pr(v/u) - 1 ) \right) \right|\right| \leq C||\Pr u|| \left|\left| \Pr(v/u) - 1 \right|\right|, $$
and by Lemma~\ref{lem:Difference}, $|| \Pr(v/u) - 1 || \leq C^{-1} (||v/u||_*^C - 1)$.
Putting them together proves the theorem.
\end{proof}

The following corollary is an immediate consequence of the theorem.

\begin{corollary}
If a sequence in $e^X$ converges in norm, then its projection in $X$ converges
in norm.
\end{corollary}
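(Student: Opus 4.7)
The plan is to derive this immediately from Theorem~\ref{thm:Estimate}. First I would unpack what convergence in the *norm means: by Theorem~\ref{thm:Isomorphic}, $\exp:X\to e^X$ is an isometric isomorphism satisfying $||\exp x||_* = \exp||x||$, so a sequence $\{u_n\} \subset e^X$ converges to a limit $u \in e^X$ in the *norm precisely when $\log u_n \to \log u$ in the ordinary norm on $X$. Since the multiplicative identity is $1$ and $\log 1 = 0$, this is equivalent to $||u_n/u||_* \to 1$ as $n \to \infty$.

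Next I would apply Theorem~\ref{thm:Estimate} with $v := u_n$, which gives
$$ ||\Pr u - \Pr u_n|| \leq ||\Pr u||\bigl( ||u_n/u||_*^C - 1 \bigr). $$
Because $||\Pr u||$ is a fixed finite constant (independent of $n$) and the map $t \mapsto t^C - 1$ is continuous at $t = 1$ with value $0$, the right-hand side tends to $0$ as $n \to \infty$. Hence $\Pr u_n \to \Pr u$ in norm in $X$, which is precisely the claim.

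There is no real obstacle here — the corollary is essentially a one-line consequence of Theorem~\ref{thm:Estimate}. The only point that deserves explicit justification is the translation between \emph{convergence of $u_n$ to $u$ in $e^X$} and the condition $||u_n/u||_* \to 1$, which is why the argument above begins by invoking the isometric isomorphism of Theorem~\ref{thm:Isomorphic}. Note also that the estimate is in fact sharper than stated: it shows that relative error in the projection is controlled, not just absolute error, which hints at a companion result (not needed here) that the convergence passes through division as well.
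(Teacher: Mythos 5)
Your proof is correct and is exactly the argument the paper intends: the paper gives no proof, calling the corollary an immediate consequence of Theorem~\ref{thm:Estimate}, and your application of that theorem with $v = u_n$ (together with the observation that convergence in the *norm means $||u_n/u||_* \to 1$) is the natural way to make "immediate" precise. No gaps.
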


Note that the converse is not true --- i.e., a non-convergent sequence in $e^X$
may still have a convergent projection. This is clearly the case when the
underlying field is the complex numbers, since the projection operator is not
one-to-one (arguments differing by a multiple of $2\pi$ have the same
projection), but it is also true in the real case. For example, the projection
of the sequence $x_n = 2^{-n} \in e^\R$ clearly converges to 0, but it does not
converge in $e^\R$ with respect to the *absolute value since $\left|
\frac{x_{n+1}}{x_n} \right|_* = |2^{-1}|_* = 2$ for all $n$. In general, a
sequence in $e^X$ whose projection in $X$ converges to the origin must
necessarily diverge in $e^X$.

\section{Applications} \label{sec:Applications}

In this section we revisit the advection equation example presented in
Section~\ref{sec:AdvectionEqn} within the theoretical framework developed in
the previous sections to gain a better insight of the predicted numerical
results, some of which appear to violate the Nyquist-Shannon Sampling Theorem.
First, we explain how a `low-order' finite quotient approximation can produce
results far more accurately than classical methods and without any sampling
restrictions. Next, we justify rigorously multiplicative calculus-based
numerical methods by proving existence and convergence results for a
variable-coefficient multiplicative advection equation.

\subsection{On the Nyquist-Shannon sampling theorem} \label{sec:Nyquist}

The results of Section~\ref{sec:AdvectionEqn} seem to violate classical
assumptions on aliasing and sampling~\cite{shannon_communication_1949}---i.e.,
a wave signal requires at least two samples per wavelength for its
reconstruction without aliasing. However, we will see below that the solutions
obtained in Section~\ref{sec:AdvectionEqn} are in perfect agreement with the
sampling theorem in the multiplicative space.

We begin with the set $\{ x^k \,:\, 0 \leq k \leq n-1 \}$, which is a basis for
the space of polynomials in the real variable $x$ of degree less than $n$.
Given a set of points $x_j$ and values associated with those points $f_j \in
\C$, we wish to construct the interpolating polynomial --- i.e., we seek
coefficients
$a_k$ such that 
$$ \sum_{k=0}^{n-1} a_k x_j^k = f_j $$
for each $j$. If the number of samples is precisely $n$ and the sample points
$x_j$ are distinct, it is well known that the linear independence of the
monomial basis ensures there is a unique solution to the problem.

By Theorem~\ref{thm:Isomorphic}, under the mapping $\exp$, the set $\{ \exp(x^j)
\,:\, 0 \leq j \leq n \}$ is a basis for the set of \emph{exponential
polynomials} of degree less than $n$, which are functions that can be written as
$$ p(x) = \prod_{j=0}^{n-1} (\exp(x^j))^{a_j}, $$
for some constants $a_j \in \C$. The isomorphism between the polynomials and
exponential polynomials ensures that the multiplicative interpolation problem
has a unique solution as well. In other words, $n$ samples taken at distinct
points uniquely determine an exponential polynomial of degree less than $n$.

Take for example the $e^\C$-valued function $\exp(ikx)$ for $x \in \R$, where
$k$ is a constant. The projection $e^{ikx}$ in the Fourier basis for
$\C$-valued functions has bandwidth $k$ --- this is the highest frequency (and
only) mode. However, in the basis of exponential polynomials , the function
$\exp(ikx)$ has bandwidth 1 regardless of $k$; that is, $\exp(ikx)$ is an
exponential polynomial of degree $1$ and by the previous discussion requires
only $2$ points to determine its coefficients exactly.

Similarly, consider the $e^\C$-valued function $\exp(-ax^2)$, where $a$ is
constant. The projection $e^{-ax^2}$ has infinite bandwidth in the Fourier
basis, although the components decay exponentially with frequency. On the other
hand, in the exponential polynomial basis, $\exp(-ax^2)$ has bandwidth 2
regardless of $a$ and requires $3$ points to determine its coefficients exactly.

It comes as no surprise that some functions with narrow bandwidth in the
additive sense may have large bandwidth in the multiplicative sense. For
example, consider $f(x) = 1 - a e^{ix}$ for $|a| < 1$. In the Fourier basis,
this function has bandwidth 1. We construct a $e^\C$-valued function whose
projection is $f$ using the Taylor expansion of $\log$:
\begin{equation}
 \exp( \log(1 - a e^{ix}) ) = \exp\left( -\sum_{n=1}^\infty \frac{a^n e^{inx}}n \right) = \prod_{n=1}^\infty \exp\left( e^{inx} \right)^{-a^n/n}.
\end{equation}
While we have not introduced the multiplicative Fourier basis of functions in
$e^\C$ and it is beyond the scope of the present discussion, it is not hard to
infer that this function has infinite bandwidth in that basis and that the
modes decay slowly.

The conclusion is that one must consider the types of functions inherent in a
given problem and choose the appropriate calculus in which they are most
efficiently approximated. Because the main challenge in high frequency
wave physics is the sampling constraint imposed by a fundamental carrier
wave, we expect that the multiplicative calculus will be well suited to these
problems.

\subsection{The variable coefficient advection equation} \label{sec:VarAdvectionEqn}

We return to the example of the advection
equation~\eqref{eq:AdvectionAdditive}, but this time with a spatially varying
wave speed. In this section, prove basic existence and convergence results for
the multiplicative advection equation.

\begin{theorem} \label{thm:Existence}
Let $c(x)$ be either strictly positive or negative for all $x \in \R$ and
suppose $f(x)$ is the real part of a non-zero complex-valued analytic 
function $g$. Then there exist functions $v:\R\times[0,\infty) \rightarrow
e^\C$ and $h:\R \rightarrow e^\C$ such that $v(x,t)$ is the solution of
\begin{equation} \label{eq:StarAdvection}
v_{*t}v_{*x}^c = 1, \quad v(x,0) = h(x), 
\end{equation} 
and the real part of the projection of $v$, $u = \operatorname{Re} \Pr v$, is
the solution of 
\begin{equation} \label{eq:ClassicalAdvection}
  u_t + c u_x = 0, \quad u(x,0) = f(x).
\end{equation}
\end{theorem}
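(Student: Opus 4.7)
The plan is to transplant the classical method of characteristics into the multiplicative setting: use Theorem~\ref{thm:Analytic} to lift the initial data from $\C$ to $e^\C$, solve the *equation by composing this lift with the classical backward characteristic map, and then project back. First, since by hypothesis $g$ extends to a non-zero analytic function on some simply connected complex neighborhood $\Omega$ of $\R$, Theorem~\ref{thm:Analytic} furnishes a *analytic function $\tilde h:\Omega \rightarrow e^\C$ with $\Pr \tilde h = g$; restricting to $\R$ yields $h:\R \rightarrow e^\C$ with $\Pr h = g$ on $\R$, and I take this $h$ as the initial condition.

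Next, I would construct the backward characteristic map. Because $c(x)$ has constant sign (and is sufficiently smooth, say $C^1$), the ODE $dX/ds = c(X)$ has a unique global flow, so for every $(x,t) \in \R \times [0,\infty)$ there is a unique $\xi(x,t) \in \R$ such that the characteristic starting at $\xi(x,t)$ at time $0$ reaches $(x,t)$. Standard ODE regularity makes $\xi$ smooth, and differentiating the defining relation gives
$$\xi_t(x,t) + c(x)\,\xi_x(x,t) = 0, \qquad \xi(x,0) = x.$$
I would then set $v(x,t) = h(\xi(x,t))$. Applying the multiplicative chain rule (item~\ref{item:StarChainRule}) in each variable, using that $\xi$ is real-valued and classically differentiable while $h$ is *differentiable, gives $v^*_t = (h^* \circ \xi)^{\xi_t}$ and $v^*_x = (h^* \circ \xi)^{\xi_x}$, whence
$$v^*_t \cdot (v^*_x)^c = (h^* \circ \xi)^{\xi_t + c\,\xi_x} = (h^* \circ \xi)^0 = 1,$$
and $v(x,0) = h(x)$, exhibiting a solution of~\eqref{eq:StarAdvection}.

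For the projected problem, the real-valuedness of $\xi$ together with $\Pr h = g$ gives $\Pr v(x,t) = g(\xi(x,t))$, hence $u(x,t) := \operatorname{Re}\Pr v(x,t) = \operatorname{Re} g(\xi(x,t)) = f(\xi(x,t))$. Either differentiating along the characteristic directly, or applying the classical chain rule and using $\xi_t + c\,\xi_x = 0$, shows $u_t + c u_x = 0$ with $u(x,0) = f(x)$, as required.

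The main obstacle is the characteristic-map step: guaranteeing that $\xi$ is defined globally on $\R \times [0,\infty)$ and is smooth. This is precisely what the one-sign hypothesis on $c$ buys us; without it, characteristics can stall at a zero of $c$ or intersect, obstructing a global inverse, and some mild regularity and growth assumption on $c$ is implicitly required so that the flow exists for all time. A secondary subtlety, largely bookkeeping, is the justification of the multiplicative chain rule in the $e^\C$-valued setting; this follows from Theorem~\ref{thm:ProductDifferentiability} by writing $\log v = (\log h)\circ\xi$ and reducing to the classical chain rule for $\log h$.
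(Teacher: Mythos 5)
Your proposal is correct and follows essentially the same route as the paper: lift the initial data to $e^\C$ via Theorem~\ref{thm:Analytic}, solve the resulting transport problem, and pass back through $\Pr$. The only difference is presentational --- you make the characteristic map and the solution formula $v = h\circ\xi$ explicit and verify the *equation directly with the multiplicative chain rule, whereas the paper takes $\log$ of the *equation, cites existence for the classical advection equation in $\log v$, and uses the identity $(\log v)' = (\Pr v)'/\Pr v$ from Theorem~\ref{thm:ProductDifferentiability} to transfer the equation to the projection; the underlying computation is the same, and your remark about implicit regularity/growth assumptions on $c$ applies equally to the paper's appeal to ``basic linear PDE theory.''
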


\begin{proof}
If $f = \operatorname{Re} g$ and $g \neq 0$, then by
Theorem~\ref{thm:Analytic}, there exists a $e^\C$-valued function such that the
projection is $g$.  This function is $h$ in the statement of the theorem above.

By Theorem~\ref{thm:ProductDifferentiability}, $\log v_{*t} = (\log v)_t$ and
$\log v_{*x} = (\log v)_x$, so taking the $\log$ of
equation~\eqref{eq:StarAdvection} results in the equivalent additive form
\begin{equation} \label{eq:LogAdvection}
  (\log v)_t + c(\log v)_x = 0, \quad \log v(x,0) = \log h.
\end{equation}
The existence of the complex-valued solution to the equation above is a basic
result in linear PDE theory.  Again using
Theorem~\ref{thm:ProductDifferentiability}, we make the substitutions $(\log
v)_t = (\Pr v)_t / \Pr v$ and $(\log v)_x = (\Pr v)_x / \Pr v$ to further
transform the equation to
\begin{equation}
  (\Pr v)_t + c(\Pr v)_x = 0, \quad \Pr v(x,0) = \Pr h.
\end{equation}
By definition, $\Pr h = g$, and taking the real part above completes the proof.
\end{proof}

We now prove the convergence of the projection of the centered finite quotient
solution of the multiplicative advection equation to the solution of the
classical advection equation. Let $c$, $f$, $g$, and $h$ be as in the theorem
above. As in Section~\ref{sec:AdvectionEqn}, for a given $\Delta x$ and $\Delta
t$ we define the grid $x_j = j\Delta x$, $t_n = n\Delta t$, $j$ and $n$
integers, and a norm and the associated *norm on the spatial grid, $||\cdot||$
and $||\cdot||_*$ respectively. Let $c_j = c(x_j)$ and define a $e^\C$-valued
grid function $w_j^n$ as a centered quotient approximation to $v$ satisfying
the multiplicative advection equation, with initial conditions given by the
exact solution:
\begin{subequations} \label{eq:CenteredQuotientScheme}
\begin{align}
  & \left( \frac{w_j^{n+1}}{w_j^{n-1}} \right)^{\frac 1{2\Delta t}} + \left( \frac{w_{j+1}^n}{w_{j-1}^n} \right)^{\frac {c_j}{2\Delta x}} = 0, \\
  & w_j^0 = h(x_j) = v(x_j,0), \quad w_j^{-1} = v(x_j,-\Delta t).
\end{align}
\end{subequations}
For definiteness, we say that a sequence of discretizations $(\Delta x, \Delta
t)$ is $T$-compatible if there is a sequence of integers $N = N(\Delta t)$ such
that $T = N\Delta t$ is fixed (in other words, $T/\Delta t$ must be a positive
integer). We have the following theorem:

\begin{theorem}
For each time $T$ and any $T$-compatible sequence of discretizations $(\Delta
x,\Delta t)$ that converge to zero and satisfy $|c(x)\Delta t/\Delta x| < 1$
for all $x$, the scheme~\eqref{eq:CenteredQuotientScheme} is convergent as
$\Delta x \rightarrow 0$ --- that is, letting $\widetilde{v}_j =
v(x_j,T)$ be the grid function given by the solution
of~\eqref{eq:StarAdvection} at time $T$, we have $||\widetilde{v}/w^N||_*
\rightarrow 0$ ($N = T/\Delta t$) as $\Delta x \rightarrow 0$.  Furthermore,
the projection $\Pr w_j^N$ converges to the solution of the classical advection
equation~\eqref{eq:ClassicalAdvection} at time $T$.
\end{theorem}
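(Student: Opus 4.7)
The plan is to reduce the whole problem to a classical one via the logarithm, invoke standard convergence theory for the leapfrog scheme, and then transfer the result back to $e^\C$ using the isometric isomorphism of Theorem~\ref{thm:Isomorphic} and the projection estimate of Theorem~\ref{thm:Estimate}.

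First, taking $\log$ of the scheme~\eqref{eq:CenteredQuotientScheme} and setting $\eta_j^n = \log w_j^n$ reduces it to the standard leapfrog finite-difference scheme for the classical advection equation,
$$ \frac{\eta_j^{n+1} - \eta_j^{n-1}}{2\Delta t} + c_j \frac{\eta_{j+1}^n - \eta_{j-1}^n}{2\Delta x} = 0, $$
with initial data $\eta_j^0 = \log h(x_j)$ and $\eta_j^{-1} = \log v(x_j, -\Delta t)$. By the argument in the proof of Theorem~\ref{thm:Existence}, $\zeta(x,t) := \log v(x,t)$ is the exact classical solution of $\zeta_t + c\zeta_x = 0$ with initial datum $\log h$; in particular, $\eta_j^{-1}$ is the exact sample of $\zeta$ at $t=-\Delta t$, using the backward-in-time well-posedness of pure advection. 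Under the CFL condition $|c(x)\Delta t/\Delta x|<1$, the leapfrog scheme for this smooth, variable-coefficient, first-order hyperbolic problem is $\ell^2$-stable and second-order consistent, hence convergent by Lax--Richtmyer, so $||\eta^N - \zeta(\cdot,T)|| \to 0$ as $\Delta x \to 0$.

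Second, I would translate this back to $e^\C$ via Theorem~\ref{thm:Isomorphic}:
$$ ||\widetilde{v}/w^N||_* \;=\; \exp\bigl|\bigl| \log(\widetilde{v}/w^N) \bigr|\bigr| \;=\; \exp\bigl|\bigl| \zeta(\cdot,T) - \eta^N \bigr|\bigr| \;\longrightarrow\; 1, $$
that is, convergence to the multiplicative identity (which I read as the intended meaning of the ``$\to 0$'' in the theorem statement, since the *norm is always $\geq 1$). For the projection statement, Theorem~\ref{thm:Estimate} gives
$$ \frac{||\Pr\widetilde{v} - \Pr w^N||}{||\Pr\widetilde{v}||} \;\leq\; ||\widetilde{v}/w^N||_*^{\,C} - 1 \;\longrightarrow\; 0, $$
so $||\Pr\widetilde{v} - \Pr w^N|| \to 0$ (using that $||\Pr\widetilde{v}||$ is bounded on the grid). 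Taking real parts is a continuous, norm-contracting operation, and by Theorem~\ref{thm:Existence} $\operatorname{Re}\Pr\widetilde{v}$ is precisely the solution of~\eqref{eq:ClassicalAdvection} at time $T$, giving the final claim.

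The main obstacle is the $\ell^2$-stability step for leapfrog with variable coefficients, since the standard Von Neumann analysis in textbooks is stated for constant $c$. For smooth $c(x)$ with $|c|\Delta t/\Delta x$ uniformly bounded below $1$, stability can be recovered by a frozen-coefficient or discrete energy-method argument, but care is required to absorb commutator terms from the variation of $c_j$; this step should be stated (and cited) explicitly. A secondary subtlety is aligning norms: the $||\cdot||$ appearing on the grid must be the one for which leapfrog convergence is available (typically $\ell^2$) and for which the constant $C$ in Lemma~\ref{lem:Product} is usable ($C=1$ for all $p$-norms, so this is benign). Finally, one should note the regularity assumption implicit in using a second-order consistent scheme: $\log h$ (equivalently $g$ away from its zeros) must be sufficiently smooth, which is guaranteed by the analyticity hypothesis of Theorem~\ref{thm:Existence}.
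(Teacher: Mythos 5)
Your proposal is correct and follows essentially the same route as the paper: take logarithms to reduce the scheme to the classical leapfrog scheme for $(\log v)_t + c(\log v)_x = 0$, invoke standard leapfrog convergence under the CFL condition, and transfer the result to the projection via Theorems~\ref{thm:Estimate} and~\ref{thm:Existence}. Your side observations---that the stated limit $||\widetilde{v}/w^N||_* \rightarrow 0$ should read $\rightarrow 1$ since the *norm is bounded below by $1$, and that $\ell^2$-stability of leapfrog with variable coefficients deserves an explicit argument rather than a bare citation---are both fair points that the paper's terser proof glosses over.
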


\begin{proof}
Convergence of the scheme is equivalent to convergence of the leapfrog scheme
for equation~\eqref{eq:LogAdvection}, which is well known provided the
stability condition $|c(x)\Delta t/\Delta x| < 1$ is satisfied (see,
e.g.,~\cite{strikwerda_finite_2004}). Convergence of the projection to the
solution of the advection equation is a consequence of
Theorems~\ref{thm:Estimate} and~\ref{thm:Existence} --- i.e., if
$\widetilde{u}_j = u(x_j,T)$ is the grid function given by the solution of
equation~\eqref{eq:ClassicalAdvection} at time $T$, then
$$ ||\widetilde{u} - \Pr w^N||
  =    ||\widetilde{u} - \Pr\widetilde{v} + \Pr\widetilde{v} - \Pr w^N||
  \leq ||\widetilde{u} - \Pr\widetilde{v}|| + ||\Pr\widetilde{v} - \Pr w^N||. $$
The first term on the right vanishes by Theorem~\ref{thm:Existence} and the
second term goes to zero by the first part of this theorem and
Theorem~\ref{thm:Estimate}.
\end{proof}

\section{Conclusion} \label{sec:Conclusion}

In this paper, we have demonstrated how the multiplicative calculus can be used
to develop numerical methods without minimum sampling restrictions for wave
problems. A motivating example showed how a simple finite quotient numerical
solution of the advection equation for a Gaussian-modulated sinusoidal initial
condition is equal to the exact solution regardless of frequency or
discretization. The proper framework for understanding the numerical methods
was then presented, emphasizing the use of the Riemann surface $e^\C$ on which
the multiplicative calculus is most natural and extending it to finite
dimensional multiplicative vector spaces. Finally, some initial applications
were presented, explaining the efficiency of methods based on the
multiplicative calculus for wave problems as well as showing their convergence
for the variable coefficient advection equation.

\bibliography{MyLibrary}

\end{document}